\documentclass[11pt]{article}
\usepackage{enumerate}
\usepackage{amssymb,a4wide,latexsym,makeidx,epsfig,fleqn}
\usepackage{amsthm}
\usepackage{amsmath}
\usepackage{enumerate}
\usepackage{graphicx}
\usepackage{float}
\usepackage{color}
\usepackage{indentfirst}
\usepackage{pdflscape}
\usepackage{siunitx}
\usepackage{diagbox}
\usepackage{rotating}
\usepackage{booktabs}
\usepackage[colorlinks, linkcolor=blue, anchorcolor=blue, citecolor=blue]{hyperref}
\usepackage[numbers,sort&compress]{natbib}

\allowdisplaybreaks[4]
\newtheorem{theorem}{Theorem}[section]

\newtheorem{lemma}[theorem]{Lemma}

\newtheorem{corollary}[theorem]{Corollary}

\begin{document}
	\textwidth 150mm \textheight 225mm
	\title{Size conditions and spectral conditions for generalized factor-critical (bicritical) graphs  and $k$-$d$-critical graphs
		\thanks{Supported by the National Natural Science Foundation of China (No. 12271439)}}

	\author{{Zhenhao Zhang$^{a,b}$, Ligong Wang$^{a,b,}$\thanks{Corresponding author.}}\\
		{\small $^a$School of Mathematics and Statistics, Northwestern
			Polytechnical University,}\\ {\small  Xi'an, Shaanxi 710129,
			P.R. China.}\\
		{\small $^b$Xi'an-Budapest Joint Research Center for Combinatorics, Northwestern
			Polytechnical University,}\\
		{\small Xi'an, Shaanxi 710129,
			P.R. China. }\\
		{\small E-mail: zhangzhenhao@mail.nwpu.edu.cn, lgwangmath@163.com} }
	\date{}
	\maketitle	\begin{center}
		\begin{minipage}{120mm}
			\vskip 0.3cm
			\begin{center}
				{\small {\bf Abstract}}
			\end{center}
{\small   Let $\mbox{odd}(G)$ and $i(G)$ denote the number of nontrivial odd components and  the number of isolated vertices of a graph $G$, respectively. The $k$-Berge-Tutte-formula of a graph $G$ is defined as: $\mbox{def}_k(G)=\mathop{\text{max}}\limits_{S\subseteq V(G)}\{k\cdot i(G-S)-k|S|\} $ for even $k$; $\mbox{def}_k(G)=\mathop{\mbox{max}}\limits_{S\subseteq V(G)}\{\mbox{odd}(G-S)+k\cdot i(G-S)-k|S|\} $ for odd $k$. A $k$-barrier of a graph $G$ is the subset $S\subseteq V(G)$ that reaches the maximum value in the $k$-Berge-Tutte-formula of $G$. A graph $G$ of odd order (resp. even order) is generalized factor-critical (resp. generalized bicritical)  if $\emptyset$ is its only $k$-barrier.  	Denote by $E_G(v)$ the set of all edges incident to a vertex $v$ in $G$.			
            A $k$-matching of a graph $G$ is a function $f:E(G) \rightarrow \{0,1,...,k\}$ such that $\sum_{e\in E_G(v)} f(e)$ $\leq k$ for every vertex $v\in V(G)$.  For $1\leq d\leq k$ and $d \equiv |V(G)|$(mod 2), if for any $ v \in V(G)$,  there exists a $k$-matching $f$ such that $\sum_{e\in E_G(v)}f(e)=k-d$  and $\sum_{e\in E_G(u)}f(e)=k \text{ for any } u\in V(G)-\{v\}$. 	Then $G$ is $k$-$d$-critical.
            In this paper, we establish tight sufficient conditions in terms of size or spectral radius respectively for a graph $G$ to be generalized factor-critical, generalized bicritical, and $k$-$d$-critical.
            Furthermore, we prove the equivalence of the existence of four factors (namely, $\{K_2,\{C_t: t\geq 3\}\}$-factor, $\{K_2,\{C_{2t+1}:t\geq 1 \}\}$-factor, fractional perfect matching, perfect $k$-matching with even $k$) in a graph. Thus we also give size conditions and spectral radius conditions for a graph $G-v$ to have one of the four factors for any $v\in V(G)$.

				\vskip 0.1in \noindent {\bf Key Words}: $k$-matching, Generalized factor-critical, Generalized bicritical, $k$-$d$-critical, Spectral radius. \vskip
				0.1in \noindent {\bf AMS Subject Classification (2020)}: \ 05C50, 05C35. }
		\end{minipage}
	\end{center}
	
	\section{Introduction }
	\label{sec:ch6-introduction}
	\noindent
	
 All graphs considered in this paper are simple, connected and undirected. Let $G$ be a graph with vertex set $V(G)$ and edge set $E(G)$. The order of $G$ is the number of vertices of $G$. We use  $e(G)$ to denote the number of edges of $G$, called the size of $G$.  Let $E_G(v)=\{e \in E(G)\mid \text{$e$ is incident with  $v$ in $G$} \}$.  Let $N_G(v)$ (or simply $N(v)$) be the set of  neighbours of a vertex $v$ in $G$.
  A vertex is called an isolated vertex in a graph $G$ if it is not adjacent to any other vertex in $G$. Denote by $I(G)$ the set of isolated vertices in $G$.
Let $G$ and $H$ be two graphs. If $V(H)\subseteq V(G)$ and $E(H)\subseteq E(G)$, then $H$ is called a subgraph of $G$, denoted by $H\subseteq G$. In addition, if $H\subseteq G$ and $V(H)=V(G)$, then $H$ is a spanning subgraph of $G$.
For $V_1 \subseteq V(G)$, let $G-V_1$ be the graph formed from $G$ by deleting all
the vertices in $V_1$ and their incident edges. $G-\{v\}$ is simply denoted by $G-v$. For two graphs $G_1$ and $G_2$, we use $G_1+G_2$ to denote the union of $G_1$ and $G_2$ with vertex set $V(G_1)\cup V(G_2)$ and edge set $E(G_1)\cup E(G_2)$. The join $G_1\vee G_2$ is the graph obtained from vertex-disjoint graphs $G_1$ and $G_2$ by adding all possible edges between $V(G_1)$ and $V(G_2)$. As usual, denote by $K_n$, $K_{a,n-a}$ and $C_n$ the complete graph, the complete bipartite graph and the cycle of order $n$, respectively. Let $\mbox{odd}(G)$ and $i(G)$ denote the number of nontrivial odd components and the number of isolated vertices of a graph $G$, respectively.	The adjacency matrix of a graph $G$ of order $n$ is defined as $A(G)=(a_{ij})_{n\times n}$, where $a_{ij} = 1$ if $v_i$ is adjacent to $v_j$, and $a_{ij} = 0$ otherwise.
The adjacency spectral radius of $G$, denoted by $\rho(G)$, is the largest eigenvalue of the nonnegative matrix $A(G)$.

A $k$-matching of a graph $G$ is a function $f\colon E(G) \rightarrow \{0,1,...,k\}$ such that $\sum_{e\in E_G(v)} f(e)$ $\leq k$ for every vertex $v\in V(G)$. Let $\mu_k(G)=\mbox{max}\{\sum_{e\in E(G)} f(e)|  \text{$f$ is a $k$-matching}\}$ be the $k$-matching number of $G$.
 Similar to $k$-matching,  a fractional matching of a graph $G$ is a function $f\colon E(G) \rightarrow [0,1]$ such that $\sum_{e\in E_G(v)} f(e)$ $\leq 1$ for  every vertex $ v \in V(G)$. The fractional matching number of $G$, denoted by $\mu_f(G)$, is the maximum value of  $ \sum_{e\in E(G)} f(e)$ over all fractional matchings $f$ of $G$.
 Given subgraphs $H_1, H_2,...,H_t$ of $G$,  a $\{H_1, H_2,...,H_t\}$-factor of   $G$ is a spanning subgraph $F$ in which each connected component is isomorphic to one of $H_1, H_2,...,H_t$.

 Liu et al. \cite{Liu1,Liu2} gave the $k$-Berge-Tutte-formula of a graph $G$:
 \[\mbox{def}_k(G)=\mathop{\mbox{max}}\limits_{S\subseteq V(G)} \begin{cases}
 	k\cdot i(G-S)-k|S|, & \text{$k$ is even};     \\
 \mbox{odd}(G-S)+k\cdot i(G-S)-k|S|, &\text{$k$ is odd}.
 \end{cases}\]
 A  $k$-barrier of a graph $G$ is the subset $S\subseteq V(G)$ that reaches the maximum value in $k$-Berge-Tutte-formula.
 Chang et al.  \cite{Chang} defined a graph to be generalized factor-critical (denoted by $\text{GFC}_k$) or to be generalized bicritical (denoted by $\text{GBC}_k$). That is a graph $G$ of odd order (resp. even order) is generalized factor-critical (resp. generalized bicritical)  if $\emptyset$ is its only $k$-barrier.

 For an odd integer $k\geq 3$, Liu et al. \cite{Liu2} established the following characterization of $\text{GFC}_k$ graphs.
\begin{lemma}(\cite{Liu2})
	If $G$ is a graph of odd order $n\geq 3$ and $k\geq 3$ is odd, then $G$ is $\text{GFC}_k$ if and only if for any $ v \in V(G)$,  there exists a $k$-matching $f$ such that
	\[ \sum_{e\in E_G(v)}f(e)=k-1 \text{ and } \sum_{e\in E_G(u)}f(e)=k \text{ for any } u\in V(G)-\{v\}.
	\]
	
\end{lemma}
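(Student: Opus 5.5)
We will prove both directions with the $k$-Berge--Tutte formula of Liu et al., in the form $\mu_k(G)=\frac{1}{2}\left(k|V(G)|-\mbox{def}_k(G)\right)$ (the maximum total weight of a $k$-matching). We use it for $G$ and for the auxiliary graph $G_v$, obtained from $G$ by attaching a new pendant vertex $v'$ to $v$. Throughout, $n$ and $k$ are odd. With $S=\emptyset$ we get $\mbox{odd}(G)=1$ (since $G$ is connected of odd order $n\ge 3$) and $i(G)=0$, so $\mbox{def}_k(G)\ge 1$. Parity gives $\mbox{def}_k(G)\equiv kn\equiv 1 \pmod 2$: for any $S$, $\mbox{odd}(G-S)+k\,i(G-S)\equiv |V(G-S)|\equiv n-|S|$, and $k|S|\equiv |S|$.

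\emph{Sufficiency.} Assume that for every $v$ a $k$-matching $f_v$ exists with deficiency $1$ at $v$ and $0$ elsewhere. Then $\mu_k(G)=(kn-1)/2$, so $\mbox{def}_k(G)=1$. It remains to show that no nonempty $S$ attains the value $1$. Suppose $S\neq\emptyset$ satisfies $\mbox{odd}(G-S)+k\,i(G-S)-k|S|=1$, and pick $v\in S$. Each nontrivial odd component $C$ of $G-S$ has odd total capacity $k|C|$, while the weight $f_v$ places on edges inside $C$ counts twice. Hence $f_v$ sends weight at least $1$ from $C$ into $S$. Each isolated vertex of $G-S$ is saturated by $f_v$, so it sends weight exactly $k$ into $S$. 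The total weight entering $S$ is therefore at least $\mbox{odd}(G-S)+k\,i(G-S)=k|S|+1$. But vertices of $S$ absorb at most $k|S|-1$, because $v$ has deficiency $1$. This is a contradiction, so $\emptyset$ is the only $k$-barrier and $G$ is $\text{GFC}_k$.

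\emph{Necessity.} Assume $\emptyset$ is the unique $k$-barrier, so $\mbox{def}_k(G)=1$. Fix $v$. The desired $k$-matching $f$ exists iff $G_v$ has a $k$-matching with $f(vv')=1$, deficiency $k-1$ at $v'$, and no deficiency elsewhere. Placing weight $k$ on $vv'$ is impossible, so we need a slightly different device. The cleanest is to take $S=\{v\}$-type computations directly in $G$: we need a maximum $k$-matching of $G$ whose unique deficient vertex is $v$. Let $D$ be the set of vertices that are deficient in some maximum $k$-matching. We will prove a Gallai--Edmonds type statement for $k$-matchings (as in Liu et al.): if $A$ is the set of neighbours of $D$ outside $D$, then $A$ is a $k$-barrier. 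Granting this, uniqueness of the barrier forces $A=\emptyset$. Connectivity then gives $D=V(G)$, because $D$ is nonempty (deficiency $1$ is positive). So every $v$ is deficient in some maximum $k$-matching. Since the total deficiency is exactly $1$, that matching has deficiency $1$ at $v$ and $0$ elsewhere, which is what we want.

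\emph{Main obstacle.} The hard step is the Gallai--Edmonds-type lemma that $A=N(D)\setminus D$ is a $k$-barrier. We will try to prove it by alternating-walk (augmenting-path) arguments for $k$-matchings. Starting from a deficient vertex, we alternate between edges that have spare capacity and edges of positive weight. This shows that every vertex of $A$ is saturated in every maximum $k$-matching, and that all of its weight goes to $D$. It also shows that each component of $G[D]$ is either an isolated vertex of $G-A$ or has odd total capacity with deficiency concentrated as in the factor-critical case. Counting then gives equality in the formula for $S=A$. If this proves too long, we will instead cite the structure theorem of Liu et al. \cite{Liu1,Liu2} directly, which presumably contains exactly this decomposition.
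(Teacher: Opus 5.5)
The paper gives no proof of this statement; it is quoted from Liu et al., so your argument has to stand on its own.

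\textbf{Sufficiency.} This direction does stand on its own. The capacity count works: each nontrivial odd component of $G-S$ sends an odd, hence positive, amount of weight into $S$; each isolated vertex sends exactly $k$; and $S\ni v$ can absorb at most $k|S|-1$. The argument is correct and complete. It in fact yields $\mbox{odd}(G-S)+k\cdot i(G-S)\le k|S|-1$ for every nonempty $S$, which is the criterion of Lemma~\ref{lem3}(2). Your use of connectivity (the paper's standing assumption) is essential, not cosmetic. For odd $k\ge 3$, the graph $3K_3$ has $\emptyset$ as its unique $k$-barrier, yet every $k$-matching of it has total deficiency at least $3$. The opening remarks about the pendant-vertex graph $G_v$ lead nowhere and should be deleted.

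\textbf{Necessity: the gap.} This direction is only a reduction. Everything rests on the claim that $A=N(D)\setminus D$ is a $k$-barrier, and you assert this without proof. The alternating-walk sketch does not settle it. For odd $k$, alternating walks can close up along odd cycles exactly as blossoms do for ordinary matchings. The facts you need are precisely the content of a Gallai--Edmonds theorem:
\begin{itemize}
\item every vertex of $A$ is saturated by every maximum $k$-matching and sends all its weight into $D$;
\item nontrivial components of $G[D]$ have odd order;
\item components of $G[V(G)\setminus(D\cup A)]$ are even, so they do not contribute to $\mbox{odd}(G-A)$ (a part you never mention).
\end{itemize}

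\textbf{A way to close it.} A short rigorous route is to blow up $G$. Let $G^{(k)}$ have vertex set $V(G)\times\{1,\dots,k\}$, with $(u,i)(w,j)$ an edge iff $uw\in E(G)$.
\begin{itemize}
\item Matchings of $G^{(k)}$ project onto $k$-matchings of $G$ of the same size, and every $k$-matching lifts. The number of uncovered copies of $u$ is exactly the deficiency of $u$. So the matching number of $G^{(k)}$ equals $\mu_k(G)$.
\item The copies of a vertex are nonadjacent twins. Hence the classical Gallai--Edmonds sets of $G^{(k)}$ are $D\times\{1,\dots,k\}$ and $A\times\{1,\dots,k\}$.
\item After deleting $A\times\{1,\dots,k\}$, each nontrivial component $K$ of $G-A$ becomes a single connected component of order $k|K|$. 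This is odd iff $|K|$ is odd, because $k$ is odd. Each isolated vertex of $G-A$ becomes $k$ isolated vertices.
\item So the Tutte barrier property of $A\times\{1,\dots,k\}$ in $G^{(k)}$ translates into $\mbox{odd}(G-A)+k\cdot i(G-A)-k|A|=kn-2\mu_k(G)=\mbox{def}_k(G)$. That is, $A$ is a $k$-barrier.
\end{itemize}
With this in place, your remaining steps go through verbatim: $A=\emptyset$ by uniqueness, $D=V(G)$ by connectivity, and total deficiency $1$ forces the required $k$-matching. Alternatively, cite the $k$-Gallai--Edmonds structure theorem of Liu et al.\ explicitly, but state the precise result you use rather than writing ``presumably''.
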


  Chang et al.   \cite{Chang} extended the above results and defined a $k$-$d$-critical graph.  For an odd integer $k\geq 3$, let $1\leq d \leq k$ and $|V(G)|\equiv d$ (mod 2). If for any $ v \in V(G)$,  there exists a $k$-matching $f$ such that
  \[ \sum_{e\in E_G(v)}f(e)=k-d \text{ and } \sum_{e\in E_G(u)}f(e)=k \text{ for any } u\in V(G)-\{v\}.
  \]
   Then $G$ is $k$-$d$-critical.

 As early as 1953, Tutte   \cite{Tut} provided the definition of 2-matching and proved the equivalence of the existence of $\{K_2, \{C_{2t+1}\colon t\geq 1 \}\}$-factor and perfect 2-matching. Lu and Wang   \cite{Lu}  generalized the notions of 2-matching to $k$-matching and characterized a necessary and sufficient condition for the existence of $k$-matching in a graph.  Liu et al.  \cite{Liu1,Liu2}  proposed $k$-Berge-Tutte-formula formula.
  Based on this, Chang et al.   \cite{Chang} extended the above results by introducing the notions of $\text{GFC}_k$ graphs, $\text{GBC}_k$ graphs and $k$-$d$-critical graphs, and established the corresponding necessary and sufficient conditions in a graph. A $k$-matching is the usual notion of a matching when $k=1$.
   O \cite{O} provided a spectral condition for the existence of perfect $k$-matching in a graph. Zhao et al. \cite{Zhao} extended the result to the $A_\alpha$-spectral radius. Zhang and Lin \cite{Zhang1} presented a distance spectral radius condition to guarantee the existence of a perfect matching in a graph or a bipartite graph. Zhang et al. \cite{Zhang2} generalized
   and improved the previous result in \cite{Zhang1}. They investigated the existence of
   perfect matching in a bipartite graph with given minimum degree
   in terms of its distance spectral radius.
 Zhang and Fan   \cite{Zhang} gave a size condition and a spectral radius condition for a graph to have perfect $k$-matching. Niu et al.   \cite{Niu} extended the spectral radius condition to the $A_\alpha$-spectral radius condition.
 The spectral radius of a graph is intimately linked to its fundamental properties.
 Therefore, characterizing $\text{GFC}_k$ graphs, $\text{GBC}_k$ graphs and $k$-$d$-critical graphs in terms of size or spectral radius is a well-motivated problem.

The structure of this paper is as follows. In Section 2, we present some lemmas which will be used later. For odd $k\geq 3$, in Sections 3,4 and 5, we establish  tight sufficient conditions in terms of size or spectral radius respectively for a graph $G$ to be $\text{GFC}_k$, $\text{GBC}_k$ and $k$-$d$-critical.
 In Section 6, for even $k$, we establish  tight sufficient conditions in terms of size or spectral radius for a graph $G$ to be $\text{GFC}_k$ for odd order and $\text{GBC}_k$ for even order. Besides, we prove the equivalence of the existence of four factors (namely, $\{K_2,\{C_t\colon t\geq 3\}\}$-factor, $\{K_2,\{C_{2t+1}\colon t\geq 1\}\}$-factor, fractional perfect matching, perfect $k$-matching with even $k$) in a graph. Thus we also give size conditions and spectral radius conditions for a graph $G-v$ to contain one of these factors for any $v$ in $V(G)$.

	\section{Preliminaries}\label{sec:Preliminaries}
	
	In this section, we will give some useful lemmas which will be used later.
	
	\begin{lemma}\label{lem1}(\cite{Hong})
		Let $G$ be a connected graph of order $n$ and size $m$. Then $\rho(G)\leq \sqrt{2m-n+1}$.
 Equality holds if and only if $G=K_n$ or $K_{1,n-1}$.
	\end{lemma}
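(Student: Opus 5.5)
We follow Hong's original argument: bound $\rho^2$ by a row-sum estimate on $A^2$ and then use connectivity. Let $A=A(G)$ and let $x=(x_1,\dots,x_n)^T$ be the unit Perron eigenvector, so $x_i>0$ for all $i$ by connectivity and $Ax=\rho x$. For each vertex $v_i$ the eigen-equation gives $\rho x_i=\sum_{v_j\in N(v_i)}x_j$. The aim is the inequality
\[
\rho^2 x_i^2=\Big(\sum_{v_j\in N(v_i)}x_j\Big)^2\le d_i\sum_{v_j\in N(v_i)}x_j^2 ,
\]
but used in a form that accounts for the mass of $x$ outside $N(v_i)\cup\{v_i\}$. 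Hong's trick is to apply Cauchy--Schwarz and then the normalization $\sum_j x_j^2=1$:
\[
\rho^2x_i^2\le d_i\Big(1-x_i^2-\sum_{v_j\notin N[v_i]}x_j^2\Big)\le d_i(1-x_i^2).
\]

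Rearranging gives $x_i^2(\rho^2+d_i)\le d_i$, that is, $x_i^2\le d_i/(\rho^2+d_i)$. Summing over $i$ and using $\sum x_i^2=1$ yields
\[
1\le\sum_i\frac{d_i}{\rho^2+d_i}=n-\sum_i\frac{\rho^2}{\rho^2+d_i}.
\]
This alone is not quite the target, so I would instead use a cleaner route to $\rho^2\le 2m-n+1$. Since $G$ is connected, $d_i\ge1$, and hence $v_i$ is nonadjacent to at least $n-1-d_i$ vertices. Writing $\rho^2x_i^2\le d_i\big(1-x_i^2-\sum_{v_j\notin N[v_i]}x_j^2\big)$ and summing over $i$ gives
\[
\rho^2\le\sum_i d_i-\sum_i d_ix_i^2-\sum_i d_i\sum_{v_j\notin N[v_i]}x_j^2 .
\]
Now use $d_i\ge1$ to lower-bound the subtracted terms. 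We have $\sum_i d_ix_i^2\ge\sum_i x_i^2=1$. For the double sum, $\sum_i\sum_{v_j\notin N[v_i]}x_j^2=\sum_j x_j^2(n-1-d_j)$, and bounding $d_i\ge 1$ inside gives at least $(n-1)-\sum_j d_jx_j^2$. Adding the two lower bounds, the $\sum_j d_jx_j^2$ terms cancel against each other, leaving a total subtraction of at least $1+(n-1)-1=n-1$. Therefore $\rho^2\le 2m-(n-1)=2m-n+1$. Before writing this up, I will recheck the bookkeeping of this cancellation, since it is where an off-by-one error could slip in.

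For the equality case, every inequality must be tight. Cauchy--Schwarz forces $x_j$ to be constant on each $N(v_i)$. The step using $d_i\ge1$ in the double sum forces that whenever $v_j\notin N[v_i]$, the vertex $v_i$ has degree $1$; that is, any two nonadjacent vertices both have degree $1$. If no such pair exists, then $G=K_n$. Otherwise, every vertex of degree at least $2$ is adjacent to all other vertices, and connectivity then forces $G=K_{1,n-1}$. Conversely, one checks directly that $\rho(K_n)=n-1=\sqrt{n(n-1)-n+1}$ and $\rho(K_{1,n-1})=\sqrt{n-1}$, so both graphs attain equality. I expect the main obstacle to be the equality analysis, in particular ruling out mixed configurations (for example a path on $4$ vertices) by carefully extracting tightness from each step.
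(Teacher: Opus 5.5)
Your argument is correct, and it is essentially Hong's original proof; the paper itself only cites Hong and gives no proof. After summing, your bound reads $\rho^2\le\sum_j x_j^2\sum_{v_i\in N(v_j)}d_i$, a weighted average of the row sums of $A^2$, each at most $2m-d_j-(n-1-d_j)=2m-n+1$. Your equality analysis, via the vanishing slack $\sum_i(d_i-1)\sum_{v_j\notin N[v_i]}x_j^2=0$, is sound.

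When writing it up, drop the bound $\sum_i d_ix_i^2\ge 1$ entirely. Set $T_1=\sum_i d_ix_i^2$ and $T_2=\sum_i d_i\sum_{v_j\notin N[v_i]}x_j^2$; the correct accounting is $T_1+T_2\ge T_1+(n-1)-T_1=n-1$, with $T_1$ kept exact. If you first replace $T_1$ by $1$, you are left only with $n-\sum_j d_jx_j^2$, which need not be at least $n-1$. So the expression $1+(n-1)-1$ is not a valid justification as written.
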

	
	\begin{lemma}\label{lem2}
	(\cite{Sch})
	For any graph $G$, $2\mu_f(G)$ is an integer. Moreover, there is a fractional matching $f$ for which
		\[\sum_{e\in E(G)}f(e)=\mu_f(G)  \]
		such that $f(e) \in \{0,1/2,1\}$ for every edge $e$.
	\end{lemma}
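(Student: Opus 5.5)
Lemma~\ref{lem2} is the classical half-integrality theorem for fractional matchings (Schrijver). I would prove it by linear programming: the maximum fractional matching is attained at a vertex of the fractional matching polytope, and I would show that every vertex of this polytope is half-integral.

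Let
\[P=\{f\in\mathbb{R}^{E(G)}: f\geq 0,\ \textstyle\sum_{e\in E_G(v)}f(e)\leq 1 \text{ for all } v\in V(G)\}.\]
This set is a nonempty bounded polyhedron, since it contains $0$ and satisfies $0\leq f\leq 1$. The linear objective $\sum_e f(e)$ therefore attains its maximum $\mu_f(G)$ at some vertex $f$ of $P$, and it suffices to show that every vertex has all coordinates in $\{0,1/2,1\}$.

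Fix a vertex $f$. Discard the edges with $f(e)=0$ and let $H$ be the subgraph formed by the support edges, those with $f(e)>0$.
\begin{itemize}
\item Call a vertex $v$ \emph{tight} if $\sum_{e\in E_G(v)} f(e)=1$.
\item Since $f$ is a vertex, the tight vertex constraints, restricted to the support variables, determine $f|_{E(H)}$ uniquely.
\item Equivalently, no nonzero $g$ supported on $E(H)$ satisfies $\sum_{e\in E(v)}g(e)=0$ at every tight vertex $v$.
\end{itemize}

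Now consider a connected component $C$ of $H$ and show that this rigidity forces a specific structure.
\begin{itemize}
\item \emph{Even cycle in $C$.} Alternating $\pm\varepsilon$ around it gives a perturbation $g$ that preserves all vertex sums, a contradiction.
\item \emph{Two odd cycles in $C$.} Two odd cycles joined by a path (or sharing vertices) give an even closed walk. Alternating signs on the cycles, with doubled weights $\pm 2\varepsilon$ on the connecting path, again preserves all sums, a contradiction.
\item \emph{$C$ is a tree.} Take a maximal path in $C$. An endpoint that is non-tight, or a path alternating between two leaves, allows a perturbation. A careful argument shows the only rigid tree is a single edge whose endpoints have all other support edges absent, so $f(e)=1$.
\item \emph{$C$ is unicyclic with an odd cycle.} Pendant trees can be perturbed unless they are absent; a pendant edge hanging off the cycle combined with the cycle's alternating signs gives a perturbation. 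So $C$ is exactly an odd cycle with all vertices tight, and solving the system $f(e)+f(e')=1$ around an odd cycle gives $f\equiv 1/2$.
\end{itemize}

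Hence $f(e)\in\{0,1/2,1\}$ for every edge, and $\mu_f(G)=\sum_e f(e)$ is a multiple of $1/2$, so $2\mu_f(G)$ is an integer.

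\textbf{Expected obstacle.} The main obstacle is organizing the perturbation argument cleanly, especially the tree/pendant cases and the odd-cycle-plus-path case.

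\textbf{Alternative route.} This route avoids the case analysis. Take an optimal $f$ and form the bipartite double cover: a copy $v'$ and $v''$ for each vertex, and edges $u'v''$ and $v'u''$ for each edge $uv$, each with weight $f(uv)$. This is a fractional matching in a bipartite graph of value $2\mu_f$. Bipartite matching polytopes are integral by total unimodularity, so there is an integral matching $M$ of size at least $2\mu_f$. Setting $f'(uv)=\tfrac12(\mathbf{1}[u'v''\in M]+\mathbf{1}[v'u''\in M])$ gives a fractional matching of $G$ with values in $\{0,1/2,1\}$ and value $|M|/2\geq\mu_f$. By optimality equality holds, which gives both claims at once. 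I would actually present this doubling argument as the main proof, since it is short.
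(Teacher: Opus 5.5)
The paper gives no proof of Lemma~\ref{lem2}; it only cites it. There is therefore nothing internal to compare against, and your argument is a correct self-contained proof.

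The doubling argument you intend to lead with is complete:
\begin{enumerate}
\item At $v'$, and likewise at $v''$, the lifted weights sum to $\sum_{u\in N(v)}f(uv)\le 1$, and the lift has value $2\mu_f(G)$.
\item Integrality of the bipartite matching polytope gives a matching $M$ with $|M|\ge 2\mu_f(G)$.
\item The vertex sum of $f'$ at $v$ equals $\frac12(\deg_M v'+\deg_M v'')\le 1$, so optimality forces $|M|=2\mu_f(G)$.
\end{enumerate}
Its only external inputs are compactness of the polytope (so an optimal $f$ exists) and total unimodularity in the bipartite case.

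The polytope-vertex route needs a case analysis but proves more. Every vertex of the fractional matching polytope is half-integral, with support a disjoint union of $K_2$'s of weight $1$ and odd cycles of weight $\frac12$. That is exactly the structure the paper needs in (3)$\Rightarrow$(2) of Theorem~\ref{thm9}, and it would make the re-weighting of even cycles of $\frac12$-edges there unnecessary. The doubling argument gives no such control. For $C_4$ it can return weight $\frac12$ on all four edges, for instance from $M=\{1'2'',3'4'',2'3'',4'1''\}$.

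If you keep the first route, the tree case needs repair. One slack endpoint does not give a perturbation, because alternating along a leaf-to-leaf path changes the sums at both ends. The correct dichotomy is this:
\begin{enumerate}
\item If some leaf $u$ of $C$ is tight, its unique support edge $uu'$ has $f(uu')=1$. Then $u'$ has no other support edge, so $C\cong K_2$.
\item Otherwise both ends of a leaf-to-leaf path are slack, and the alternating $\pm\varepsilon$ perturbation is admissible.
\end{enumerate}
The same dichotomy removes pendant trees in the unicyclic case. A tight leaf would force $C\cong K_2$, and a slack leaf lets you run $\pm2\varepsilon$ along the path into the odd cycle carrying $\pm\varepsilon$.

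Two further points in that route need a line each. Every vertex of the remaining odd cycle is tight, because a slack cycle vertex $v$ admits the alternating perturbation started at $v$, which changes only the sum at $v$. Two odd cycles sharing at least two vertices should go to the even-cycle case, since their union contains an even cycle, not to the doubled-path construction.
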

	
	\begin{lemma}\label{lem3}	(\cite{Chang})
		Let $G$ be a graph of order $n\geq 3$ and $k\geq 2$.

		(1) When $k$ is even, $G$ is $\text{GFC}_k$ (resp.\ $\text{GBC}_k$) if and only if $n$ is odd (resp.\ even) and
			\[
			i(G - S) \leq |S| - 1  \text{ for any } \varnothing \neq S \subseteq V(G).
			\]
			
		(2) When $k$ is odd, $G$ is $\text{GFC}_k$ if and only if $n$ is odd and
			\[
			\mbox{odd}(G - S) + k \cdot i(G - S) \leq k|S| - 1  \text{ for any } \varnothing \neq S \subseteq V(G).
			\]
			
		(3) When $k$ is odd, $G$ is $\text{GBC}_k$ if and only if $n$ is even and
			\[
			\mbox{odd}(G - S) + k \cdot i(G - S) \leq k|S| - 2  \text{ for any } \varnothing \neq S \subseteq V(G).
			\]

	\end{lemma}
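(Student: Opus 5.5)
The plan is to unwind the definition of a $k$-barrier directly. The condition ``$\emptyset$ is the only $k$-barrier'' means exactly that the quantity in the $k$-Berge-Tutte-formula is strictly larger at $S=\emptyset$ than at every nonempty $S\subseteq V(G)$. Two ingredients are needed: the value at $S=\emptyset$, and, for odd $k$, a parity observation that turns ``strictly less'' into the sharpened inequalities with $-1$ and $-2$.

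\emph{Value at $\emptyset$.} All graphs in the paper are connected, and $n\ge 3$, so $i(G)=0$. For even $k$ the value at $\emptyset$ is therefore $0$. For odd $k$, $G$ is itself a single nontrivial component, so $\mbox{odd}(G)=1$ if $n$ is odd and $0$ if $n$ is even. Hence the value at $\emptyset$ is $1$ for odd $n$ and $0$ for even $n$.

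\emph{Part (1), even $k$.} The parity of $n$ is part of the definition of $\text{GFC}_k$/$\text{GBC}_k$. For $S\neq\emptyset$, the strict inequality $k\cdot i(G-S)-k|S|<0$ holds if and only if $i(G-S)\le |S|-1$, since both sides are integers and $k>0$. This gives (1).

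\emph{Parts (2) and (3), odd $k$.} Here I would first prove a parity lemma: for every $S\subseteq V(G)$,
\[
\mbox{odd}(G-S)+k\cdot i(G-S)-k|S|\equiv n \pmod 2 .
\]
To see this, write $n=|S|+i(G-S)+\sum_C |C|$, where the sum runs over the nontrivial components $C$ of $G-S$. Even components contribute an even amount and odd components an odd amount, so
\[
n\equiv |S|+i(G-S)+\mbox{odd}(G-S)\pmod 2 .
\]
Since $k$ is odd, $k\cdot i\equiv i$ and $k|S|\equiv -|S|\equiv |S|$ modulo $2$, which yields the claim.

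Now apply the lemma case by case.
\begin{itemize}
\item If $n$ is odd, every value of the formula is odd. So ``strictly less than $1$'' is equivalent to ``at most $-1$'', which is $\mbox{odd}(G-S)+k\cdot i(G-S)\le k|S|-1$.
\item If $n$ is even, every value is even. So ``strictly less than $0$'' is equivalent to ``at most $-2$'', which gives (3).
\end{itemize}

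Conversely, if these inequalities hold for all nonempty $S$, then every nonempty $S$ gives a value strictly below the value at $\emptyset$. Hence $\emptyset$ attains the maximum and is the unique $k$-barrier.

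There is no real obstacle here. The only points needing care are the use of connectivity to get $i(G)=0$ (and $\mbox{odd}(G)=1$ for odd $n$), and the parity lemma, which is what produces the exact constants $-1$ and $-2$.
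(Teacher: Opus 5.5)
Your proof is correct. The paper gives no proof of this lemma; it is quoted from Chang et al. Your argument derives it directly from the definition of a $k$-barrier, in three steps:
\begin{enumerate}
\item compute the value of the $k$-Berge--Tutte expression at $S=\varnothing$;
\item for odd $k$, show that every value of the expression is congruent to $n$ modulo $2$;
\item turn the strict inequality ``value at $S$ $<$ value at $\varnothing$'' into the integer bounds $|S|-1$, $k|S|-1$ and $k|S|-2$.
\end{enumerate}

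Compared with the bare citation, your route makes the hypotheses visible. The only substantive input is the value at $\varnothing$, and that is exactly where connectivity enters. Connectivity is genuinely needed there, not just convenient. For instance, take $k$ even and $G=K_1+K_3$, which has order $4$:
\begin{itemize}
\item The value at $\varnothing$ is $k$.
\item One checks that $i(G-S)\le |S|$ for every nonempty $S$. So $\varnothing$ is the unique $k$-barrier, and $G$ is $\text{GBC}_k$ by the literal definition.
\item Yet deleting one triangle vertex leaves one isolated vertex, so $i(G-S)\le |S|-1$ fails.
\end{itemize}
So you are right to rely on the paper's standing assumption that all graphs are connected. That assumption gives $i(G)=0$, and together with $n\ge 3$ it gives $\mbox{odd}(G)=1$ for odd $n$ and $\mbox{odd}(G)=0$ for even $n$. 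Without it, the statement as written can fail.
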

	
	\begin{lemma}\label{lem6}	(\cite{Chang})
		Let $G$ be a  graph of order $n\geq 3$, $k\geq 3$ is odd and let $1\leq d \leq k$ with  $n\equiv d$ (mod 2). $G$ is $k$-$d$-critical if and only if
		\[\mbox{odd}(G-S)+k\cdot i(G-S) \leq
		k|S|-d \text{ for any } \varnothing \neq S \subseteq V(G).
		\]

	\end{lemma}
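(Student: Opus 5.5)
The natural route is to read $k$-$d$-criticality as a family of perfect $b$-matching problems, one for each vertex $v$, and then apply Tutte's criterion for perfect (uncapacitated) $b$-matchings. Fix $v\in V(G)$ and set $b_v(v)=k-d$ and $b_v(u)=k$ for $u\neq v$. Since every degree bound is at most $k$, no edge can receive weight larger than $k$. So a $k$-matching $f$ with $\sum_{e\in E_G(v)}f(e)=k-d$ and $\sum_{e\in E_G(u)}f(e)=k$ for all $u\neq v$ is exactly a perfect $b_v$-matching. Tutte's theorem says such an $f$ exists if and only if, for every $S\subseteq V(G)$,
\[ b_v(S)\;\geq\; b_v(I(G-S))+\#\{C : C \text{ a nontrivial component of } G-S,\ b_v(C) \text{ odd}\}. \tag{$\ast$}\]
Hence $G$ is $k$-$d$-critical if and only if $(\ast)$ holds for every $v$ and every $S$. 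The proof then reduces to translating $(\ast)$ into the stated inequality, which I will do in two directions.

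\emph{Necessity.} Let $\varnothing\neq S\subseteq V(G)$ and pick $v\in S$. Then $b_v(S)=k|S|-d$. Every isolated vertex of $G-S$ has $b_v=k$, so $b_v(I(G-S))=k\cdot i(G-S)$. For a nontrivial component $C$ we have $b_v(C)=k|C|$, which is odd exactly when $|C|$ is odd because $k$ is odd. Substituting into $(\ast)$ gives $\mbox{odd}(G-S)+k\cdot i(G-S)\leq k|S|-d$.

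\emph{Sufficiency.} Assume the inequality holds for all nonempty $S$, fix $v$, and verify $(\ast)$ for every $S$ by cases.

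\begin{itemize}
\item[(a)] $S=\varnothing$. The graph $G$ is connected and nontrivial, so the right side of $(\ast)$ is either $0$ or $1$. It is $1$ only if $b_v(V(G))=kn-d$ is odd. This does not happen, since $n\equiv d \pmod 2$ and $k$ is odd, so $kn-d$ is even.
\item[(b)] $v\in S$. This is exactly the hypothesis, by the computation in the necessity part.
\item[(c)] $v\notin S$ and $v$ is isolated in $G-S$. Here $S\neq\varnothing$ because $G$ is connected. The right side of $(\ast)$ is $k(i-1)+(k-d)+\mbox{odd}(G-S)$, where $i=i(G-S)$. This equals $k\,i+\mbox{odd}(G-S)-d$, which is at most $k|S|-2d\le k|S|=b_v(S)$.
\item[(d)] $v\notin S$ and $v$ lies in a nontrivial component $C_0$ of $G-S$. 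Again $S\neq\varnothing$. The only change from the unweighted count is that the parity of $b_v(C_0)=k|C_0|-d$ differs from that of $|C_0|$ by the parity of $d$. So the number of odd-$b_v$ nontrivial components is at most $\mbox{odd}(G-S)+1$. The right side of $(\ast)$ is therefore at most $k\,i+\mbox{odd}(G-S)+1\le k|S|-d+1\le k|S|$, using $d\ge 1$.
\end{itemize}

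The main obstacle is the bookkeeping in case (d): one must make sure the parity shift of the special component costs at most $1$, and that this is absorbed by $d\geq 1$. The other delicate point is quoting the correct uncapacitated version of Tutte's $b$-matching theorem, in which isolated vertices contribute their full $b$-value and nontrivial components contribute only through parity. If that form is not available, I would instead derive it from the $k$-Berge-Tutte formula of Liu et al. applied to $G$ with a pendant gadget at $v$ absorbing deficiency $d$.
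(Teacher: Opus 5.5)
Your proof is correct. There is no in-paper proof to compare it with: the paper states this lemma without proof and imports it from Chang et al. The introduction indicates that their characterizations are built on the uniform $k$-Berge--Tutte formula of Liu et al.

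Your route is genuinely different. You encode criticality at $v$ as a perfect $b_v$-matching problem with a non-uniform demand ($b_v(v)=k-d$, $b_v\equiv k$ elsewhere) and apply the perfect $b$-matching criterion once per vertex. The version you quote is the right one: isolated vertices of $G-S$ count with full weight, and nontrivial components count only through the parity of $b_v$. It is, for instance, what Tutte's $f$-factor theorem gives after replacing each edge by sufficiently many parallel edges, since any $T$ containing a vertex with a neighbour outside $S$ then makes the deficiency term huge. Your translation in the necessity direction and the four sufficiency cases (a)--(d) cover all $S$ and check out, including the edge case $d=k$ where $b_v(v)=0$.

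What your approach buys is a short case analysis that shows exactly where each hypothesis enters:
\begin{itemize}
\item the parity condition $n\equiv d \pmod 2$ is used only for $S=\varnothing$;
\item $d\geq 1$ is used only to absorb the parity shift of the component containing $v$;
\item connectedness makes $S=\varnothing$ harmless and forces $S\neq\varnothing$ in cases (c) and (d).
\end{itemize}
The last use is essential rather than cosmetic. It is covered by the paper's standing assumption that all graphs are connected, but the lemma as literally stated fails without it. For example, $K_3+K_3$ with $k=3$, $d=2$ satisfies $\mbox{odd}(G-S)+3\,i(G-S)\leq 3|S|-2$ for every nonempty $S$. Yet it is not $3$-$2$-critical: for $v$ in one triangle, the demands on that triangle sum to $1+3+3=7$, which is odd. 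When you write this up, state connectedness explicitly as a hypothesis.

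The gadget-based fallback through the $k$-Berge--Tutte formula that you mention is closer in spirit to the source. It is unnecessary once the $b$-matching theorem is available.
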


	\begin{lemma}\label{lem4}	(\cite{Zheng})
		Let $n = s + \sum_{i=1}^t n_i $. If $n_1 \geq n_2 \geq \cdots \geq n_t \geq p \geq 1 $ and $ n_1 < n - s - p(t-1) $, then
		\[
		e\bigl(K_s \vee (K_{n_1} + \cdots + K_{n_t})\bigr) < e\bigl(K_s \vee (K_{n-s-p(t-1)} \cup (t-1)K_p)\bigr).
		\]

	\end{lemma}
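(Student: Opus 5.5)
We prove the edge-count inequality of Lemma~\ref{lem4} by a direct computation. The idea is to move vertices from the smaller cliques into the largest one and check that each move strictly increases the number of edges. Since $s$ is fixed, write
\[
e\bigl(K_s \vee (K_{n_1}+\cdots+K_{n_t})\bigr)=\binom{s}{2}+s(n-s)+\sum_{i=1}^t\binom{n_i}{2}.
\]
The only part that depends on the partition $(n_1,\dots,n_t)$ is $\sum_i \binom{n_i}{2}$. The same expression for $K_s\vee(K_{n-s-p(t-1)}\cup (t-1)K_p)$ has parts $N:=n-s-p(t-1)$ and $t-1$ copies of $p$; these sum to $n-s$, as required. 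So it suffices to show the following: for positive integers $n_1\ge\cdots\ge n_t\ge p$ with $\sum n_i=n-s$ and $n_1<N$,
\[
\sum_{i=1}^t\binom{n_i}{2}<\binom{N}{2}+(t-1)\binom{p}{2}.
\]

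The key step is a transfer lemma. If $a\ge b$ are integers and we replace $(a,b)$ by $(a+1,b-1)$, then $\binom{a}{2}+\binom{b}{2}$ changes by
\[
\binom{a+1}{2}-\binom{a}{2}+\binom{b-1}{2}-\binom{b}{2}=a-(b-1)=a-b+1>0.
\]
So moving one vertex from a smaller part to a part at least as large strictly increases the sum.

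Starting from $(n_1,\dots,n_t)$, repeatedly move one vertex from some $n_i$ with $i\ge 2$ and $n_i>p$ into the first part. The first part is always the largest, since it only grows, so each move strictly increases the sum of binomials. Every part $i\ge2$ stays at least $p$ throughout. The process stops exactly when all parts $i\ge 2$ equal $p$, at which point the first part is $n-s-p(t-1)=N$. The hypothesis $n_1<N$ guarantees that at least one move is performed: otherwise every $n_i=p$ for $i\ge 2$, which would force $n_1=N$. Hence the inequality is strict.

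There is no real obstacle here; the argument is elementary. The one point needing care is that each transfer must go into a part at least as large as the donor, which holds because we always add to the first part and it remains the maximum. Adding back the constant $\binom{s}{2}+s(n-s)$ to both sides then gives the stated strict inequality.
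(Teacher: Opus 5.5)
Your proof is correct. The paper gives no proof of this lemma; it simply imports it from Zheng et al. So there is no internal argument to compare against, and your vertex-transfer argument is a complete, self-contained justification of the edge-count version. The key steps all check out:
\begin{itemize}
\item Each move of one vertex from a part of size $b$ into a part of size $a\ge b$ raises $\sum_i\binom{n_i}{2}$ by exactly $a-b+1>0$.
\item The first part stays maximal throughout, since it only grows while the others only shrink.
\item The process terminates precisely at the profile $(N,p,\dots,p)$ with $N=n-s-p(t-1)$.
\item The hypothesis $n_1<N$ forces at least one move, which gives strictness.
\end{itemize}

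The ordering $n_1\ge\cdots\ge n_t$ is used only to guarantee that the receiving part is never smaller than the donor. The same argument therefore works without ordering, provided you always transfer into a part of maximum size.
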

	
		\begin{lemma}\label{lem5}	(\cite{Ber})
		Let $A = (a_{ij} )$ and $B = (b_{ij} $) be two nonnegative square matrices of
		order $n$. Let $\rho(A)$ be the largest eigenvalue of $A$. If $A \geq  B$, then $\rho(A) \geq \rho(B)$. Furthermore, if $B$ is irreducible, $A \leq B$ and $A\not = B$, then $\rho(A)<\rho(B)$.
	\end{lemma}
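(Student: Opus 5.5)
The statement to prove is the last one shown, Lemma~\ref{lem5}: monotonicity of the Perron root under entrywise domination, with strict inequality when the larger matrix is irreducible. I would prove it directly from Perron--Frobenius theory.

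\textbf{Weak inequality.} Suppose $0\le B\le A$ entrywise. Then $0\le B^m\le A^m$ entrywise for every $m\ge 1$, by induction, since products of nonnegative matrices preserve entrywise order. Take any consistent matrix norm that is monotone on nonnegative matrices, such as the maximum row sum. This gives $\|B^m\|\le\|A^m\|$. Gelfand's formula $\rho(M)=\lim_m\|M^m\|^{1/m}$ then yields $\rho(B)\le\rho(A)$.

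Alternatively, for a nonnegative matrix the spectral radius is itself an eigenvalue, which is why $\rho$ may be called the largest eigenvalue. Let $x\ge 0$, $x\ne 0$, satisfy $Bx=\rho(B)x$. Then $Ax\ge Bx=\rho(B)x$. The Collatz--Wielandt characterization $\rho(A)=\max_{x\ge0,\,x\neq0}\min_{x_i>0}(Ax)_i/x_i$ then gives $\rho(A)\ge\rho(B)$.

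\textbf{Strict inequality.} Now let $B$ be irreducible, $A\le B$ and $A\ne B$. Since $A\le B$, irreducibility of $B$ is exactly what is needed. By Perron--Frobenius, $B$ has a positive left eigenvector $y^{T}>0$ with $y^{T}B=\rho(B)y^{T}$. Let $x\ge0$, $x\ne0$, be a Perron vector of $A$, so $Ax=\rho(A)x$. Then
\[
\rho(A)\,y^{T}x=y^{T}Ax\le y^{T}Bx=\rho(B)\,y^{T}x,
\]
and $y^{T}x>0$, which recovers the weak inequality. If equality held, then $y^{T}(B-A)x=0$. Because $y>0$, this forces $(B-A)x=0$, so $Bx=Ax=\rho(B)x$.

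Hence $x$ is a nonnegative eigenvector of the irreducible matrix $B$ for the eigenvalue $\rho(B)$. By Perron--Frobenius it is a multiple of the Perron vector of $B$, so $x>0$. Then $(B-A)x=0$ with $B-A\ge0$ and $x>0$ forces $B=A$, a contradiction. Therefore $\rho(A)<\rho(B)$.

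\textbf{Main obstacle.} The delicate step is justifying $x>0$ in the equality case. The vector $x$ is only a nonnegative eigenvector of the possibly reducible matrix $A$. Positivity comes from reinterpreting it as an eigenvector of the irreducible $B$. One can see this directly: $(I+B)^{n-1}>0$ and $(I+B)^{n-1}x=(1+\rho(B))^{n-1}x$, so $x$ is positive. This is the standard route, and in the paper one simply cites~\cite{Ber}.
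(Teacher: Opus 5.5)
Your proof is correct. The paper gives no proof of this lemma; it simply quotes it from \cite{Ber}. So your write-up is a self-contained substitute for a citation, not an alternative to an argument in the text.

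Each step checks out.
\begin{itemize}
\item \emph{Weak inequality.} Entrywise $0\le B^m\le A^m$ together with Gelfand's formula gives $\rho(B)\le\rho(A)$. Your alternative route is also valid: it only needs the inequality half of Collatz--Wielandt, namely that $Ax\ge\alpha x$ with $x\ge 0$, $x\ne 0$ implies $\rho(A)\ge\alpha$, and this holds for reducible $A$ too.
\item \emph{Strict inequality.} Pairing a nonnegative Perron vector $x$ of $A$ with the positive left Perron vector $y$ of $B$ gives $\rho(A)\,y^{T}x\le\rho(B)\,y^{T}x$, with $y^{T}x>0$. In the equality case, $(B-A)x=0$ makes $x$ an eigenvector of $B$ for $\rho(B)$. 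Then $(I+B)^{n-1}>0$ gives $x>0$, which forces $A=B$.
\end{itemize}

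Compared with the citation, your version makes explicit which Perron--Frobenius facts are used:
\begin{itemize}
\item existence of a nonnegative eigenvector for $\rho$ of an arbitrary nonnegative matrix;
\item positivity of the left Perron vector and of $(I+B)^{n-1}$ when $B$ is irreducible.
\end{itemize}
It also shows that irreducibility is needed only for the larger matrix $B$, not for $A$. The citation's only advantage is brevity.
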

	
			\begin{lemma}\label{lem7}	(\cite{Chang})
		 Let $G$ be a connected graph of order $n\geq 3$ and let $k$ be even. Then $G$ is $\text{GFC}_k$ (resp.\ $\text{GBC}_k$) if and only if $n$ is odd (resp.\ even)
		and $G-v$ has a perfect $k$-matching for any $v\in V(G)$.
	\end{lemma}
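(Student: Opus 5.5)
The plan is to reduce both sides of the equivalence to the same inequality on isolated vertices. On the $\text{GFC}_k$/$\text{GBC}_k$ side we use Lemma~\ref{lem3}(1). On the perfect $k$-matching side we use the $k$-Berge-Tutte-formula of Liu et al.\ for even $k$. The parity condition on $n$ appears identically on both sides, so it can be carried along unchanged. It therefore suffices to show that, for even $k$,
\[
i(G-S)\le |S|-1 \ \text{ for all } \varnothing\neq S\subseteq V(G)
\iff
G-v \text{ has a perfect $k$-matching for every } v\in V(G).
\]

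The first step is to record the deficiency version of the perfect $k$-matching criterion. By the $k$-Berge-Tutte-formula, the maximum total weight of a $k$-matching of a graph $H$ is $\frac{1}{2}\bigl(k|V(H)|-\mbox{def}_k(H)\bigr)$. Hence $H$ has a perfect $k$-matching if and only if $\mbox{def}_k(H)=0$. For even $k$ this means $k\cdot i(H-T)-k|T|\le 0$, that is, $i(H-T)\le |T|$ for every $T\subseteq V(H)$. The choice $T=\varnothing$ contributes $0$ because $H$ is connected with at least two vertices, or more generally because $i(H)\le 0$ is exactly what is being required. I expect this translation to be the only delicate point: one has to be sure the formula of \cite{Liu1,Liu2} is stated with this normalization, so that zero deficiency is equivalent to a perfect $k$-matching. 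If it is instead stated only as a max-min formula, I would derive the zero-deficiency criterion from it directly.

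Next comes the forward direction. Assume $G$ is $\text{GFC}_k$ (resp.\ $\text{GBC}_k$), fix $v\in V(G)$, put $H=G-v$, and let $T\subseteq V(H)$ be arbitrary. Set $S=T\cup\{v\}$, which is nonempty. Since $G-S=H-T$, Lemma~\ref{lem3}(1) gives $i(H-T)=i(G-S)\le |S|-1=|T|$. Hence $\mbox{def}_k(H)=0$, and $H$ has a perfect $k$-matching.

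Finally, the converse. Let $\varnothing\ne S\subseteq V(G)$ and pick any $v\in S$. Put $T=S\setminus\{v\}\subseteq V(G-v)$. Since $G-v$ has a perfect $k$-matching, the criterion from the first step gives $i(G-S)=i((G-v)-T)\le |T|=|S|-1$. Together with the assumed parity of $n$, Lemma~\ref{lem3}(1) shows that $G$ is $\text{GFC}_k$ (resp.\ $\text{GBC}_k$). The hypothesis $n\ge 3$ is needed only so that Lemma~\ref{lem3} applies and $G-v$ is a nontrivial graph.
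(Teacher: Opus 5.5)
Your proof is correct. The paper itself gives no proof to compare against: like Lemma~\ref{lem3}, this lemma is quoted from \cite{Chang} without argument.

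What you show is that Lemma~\ref{lem7} is a formal consequence of Lemma~\ref{lem3}(1) together with the even-$k$ criterion ``$H$ has a perfect $k$-matching iff $i(H-T)\le |T|$ for all $T\subseteq V(H)$''. The link is the correspondence $S=T\cup\{v\}$, with $|S|=|T|+1$ and $G-S=(G-v)-T$. This is a clean, self-contained reduction, and it makes visible that the ``$-1$'' in Lemma~\ref{lem3}(1) is exactly the cost of deleting one vertex.

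On the point you flag as delicate, only one direction of the criterion needs a theorem.
\begin{itemize}
\item Necessity, which your converse uses, is a counting argument. Each isolated vertex of $H-T$ must receive weight $k$ along edges into $T$, and the vertices of $T$ absorb at most $k|T|$ in total.
\item Sufficiency, which your forward direction uses, can avoid the normalization of the Liu et al.\ formula. For even $k$, a perfect $k$-matching exists iff a fractional perfect matching exists; this is (3)$\Leftrightarrow$(4) of Theorem~\ref{thm9}, whose proof does not rely on this lemma. Fractional perfect matchings are characterized by exactly $i(H-T)\le|T|$ for all $T$.
\end{itemize}

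One small slip: $G-v$ need not be connected even though $G$ is (take $v$ a cut vertex), so drop that justification for $T=\varnothing$. That case is simply $S=\{v\}$ in your argument and needs no separate treatment. Your proof never uses the connectedness of $G$, which is harmless.
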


	\section{ Size Condition and Spectral Radius Condition for a graph to be $\text{GFC}_k$ with odd $k\geq 3$.}
	
	For a graph $G$ of odd order $n$, we establish  tight sufficient conditions in terms of size or spectral radius for $G$ to be $\text{GFC}_k$, where $k\geq 3$ is an odd integer. Theorem    \ref{thm1} gives the size condition for $G$ to be $\text{GFC}_k$. Theorem    \ref{thm2} gives the spectral radius condition for $G$ to be $\text{GFC}_k$.
	
	\begin{theorem}\label{thm1}
		For odd $n\geq 3$ and odd $k\geq 3$, let $G$ be a graph of order $n$.
		
	(1)	If $n\geq 7$ or $n=3$ and $e(G)\geq \binom{n-1}{2}+1$, then $G$ is $\text{GFC}_k$ unless $ G = K_1 \vee (K_{n-2}+K_1)$.
	
	(2) If $n=5$ and $e(G)\geq 7$, then $G$ is $\text{GFC}_k$ unless $ G = K_1 \vee (K_3+K_1)$ or $G =K_2 \vee 3K_1$.
		
	\end{theorem}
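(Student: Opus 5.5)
We argue by contradiction using Lemma \ref{lem3}(2). Suppose $G$ has odd order $n$, satisfies the size hypothesis, and is not $\text{GFC}_k$. Then there is a nonempty $S\subseteq V(G)$ with $\mbox{odd}(G-S)+k\cdot i(G-S)\geq k|S|$. Put $s=|S|\geq 1$, $q=\mbox{odd}(G-S)$ and $i=i(G-S)$. Let $t=q+i$ be the number of odd components of $G-S$ (trivial ones included). Since $k\geq 3$, we have $q+ki\geq ks$. This forces $t\geq s$: if $i\geq s$ this is immediate, and otherwise $q\geq k(s-i)\geq s-i$. By parity, $n-s\equiv t \pmod 2$.

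Next we maximize the number of edges. We add edges to make $S$ a clique joined to everything, make each odd component complete, and merge all even components into one odd component (or absorb them into an odd one). None of this changes the inequality, since odd components keep their type and isolated vertices are untouched; if needed we may also turn a nontrivial odd component into an isolated vertex, which only raises $q+ki$. So $G\subseteq K_s\vee(K_{n_1}+\cdots+K_{n_t})$, where the $n_j$ are odd. By Lemma \ref{lem4} with $p=1$, the size is at most $e(K_s\vee(K_{n-s-t+1}+(t-1)K_1))$. We should check that the extremal configuration itself satisfies the barrier inequality. It has $i\geq t-1\geq s-1$ isolated vertices and one extra odd component, and $q+ki\geq ks$ holds whenever $t\geq s+1$, or $t=s$ with the big component nontrivial and $k(s-1)+1\geq ks$. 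The latter fails unless handled, so we treat the extreme case carefully. It is simplest to keep $t\geq s+1$: if $t=s$ then $q\geq k(s-i)$ forces $i=s$, which gives $t\geq s$ with all components trivial, hence $n=2s$, contradicting odd $n$. So we may take $t\geq s+1$, and in fact $t\geq s+1$ with $t\equiv n-s$ means we can reduce to $t=s+1$ by merging components, which only increases edges.

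This gives $e(G)\leq f(s):=\binom{s}{2}+s(n-s)+\binom{n-2s}{2}$, the size of $K_s\vee(K_{n-2s}+sK_1)$, for $1\leq s\leq (n-1)/2$. The function $f$ is a convex quadratic in $s$, so its maximum is at an endpoint. At $s=1$ we get $f(1)=\binom{n-1}{2}+1$, attained by $K_1\vee(K_{n-2}+K_1)$. At $s=(n-1)/2$ the graph is $K_s\vee(K_1+sK_1)=K_{(n-1)/2}\vee\frac{n+1}{2}K_1$. Comparing $f((n-1)/2)$ with $f(1)$ is the main computation. For $n\geq 7$ we expect $f((n-1)/2)<f(1)$: at $n=7$ these are $15$ and $16$. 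For $n=5$ the two are equal at $7$, which produces the second exception $K_2\vee 3K_1$. For $n=3$ only $s=1$ occurs.

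Finally, equality. If $e(G)\geq f(1)$ and $G$ is not $\text{GFC}_k$, then every inequality used must be tight. This means $s=1$ (or $s=2$ when $n=5$), and $G$ equals the extremal join. Here we need the strictness in Lemma \ref{lem4}, together with the fact that any added edge would be missing otherwise. We then verify directly that $K_1\vee(K_{n-2}+K_1)$ with $S$ the center gives $0+k\cdot 1\geq k$, and that $K_2\vee 3K_1$ gives $3k\geq 2k$, so neither is $\text{GFC}_k$. The main obstacle is the reduction step: making sure the merging and completion operations preserve the violating inequality while landing exactly on the family $K_s\vee(K_{n-2s}+sK_1)$, including the boundary case $n-2s=1$.
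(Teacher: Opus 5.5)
Your argument is correct in substance and uses the same ingredients as the paper: Lemma \ref{lem3}(2), completing $G$ to $K_s\vee(\text{cliques})$, Lemma \ref{lem4} with $p=1$, convexity of $f(s)=e(K_s\vee(K_{n-2s}+sK_1))$, and a comparison of $s=1$ with the balanced split graph. The organization differs.

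\begin{itemize}
\item The paper picks a maximum-size counterexample and splits into $\mbox{odd}(G-S)>0$, which gives $1\le s\le\frac{n-3}{2}$, and $\mbox{odd}(G-S)=0$, which gives $K_{\frac{n-1}{2}}\vee\frac{n+1}{2}K_1$ separately. At each step it must check that the modified graph is still not $\text{GFC}_k$.
\item You count all odd components $t=q+i$. The inequality gives $t\ge s$, and parity gives $t\equiv n-s\equiv s+1 \pmod 2$, so $t\ge s+1$. You then bound $e(G)$ directly.
\end{itemize}

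In your version the paper's Case 2 graph is just the endpoint $s=\frac{n-1}{2}$ of one family. Equality then follows from the strictness in Lemma \ref{lem4}, strict monotonicity in $t$, and strict convexity of $f$. This is cleaner, and it makes the characterization of the extremal graphs more transparent than the paper's maximality argument.

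\textbf{First fix: the comparison you left undone.} You leave the decisive comparison as ``we expect'' and check only $n=7$. It is one line: $f(1)-f(\frac{n-1}{2})=\frac{(n-3)(n-5)}{8}$. Writing $n=2m+1$, this is $\frac{(m-1)(m-2)}{2}$. It is positive for $n\ge 7$ and zero for $n=3,5$. Together with strict convexity it gives $f(s)<f(1)$ for all $2\le s\le\frac{n-1}{2}$ when $n\ge 7$, which is what forces $s=1$ in the equality case. This is exactly the paper's $g(n)$, and without it the proof is incomplete.

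\textbf{Second fix: two misstatements, both harmless.}
\begin{itemize}
\item You claim that absorbing even components ``never changes the inequality.'' This is false when $q=0$. Then the only odd components are isolated vertices, and absorbing an even component into one of them creates a nontrivial odd component, lowering $q+ki$ by $k-1$.
\item ``All components trivial, hence $n=2s$'' ignores possible even components.
\end{itemize}
Neither matters. Your bound uses only $t$, which absorption leaves unchanged, and $t\neq s$ follows immediately from $t\equiv n-s \pmod 2$. You can therefore delete the passages about the extremal configuration satisfying the barrier inequality and about turning a nontrivial odd component into an isolated vertex.
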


	\begin{proof}

	Let $n\geq 3$ and $k\geq 3$ be odd integers. Let $G$ be a graph of order $n$ that satisfies the conditions in Theorem   \ref{thm1}.
	Assume to the contrary that $G$ is not $\text{GFC}_k$.  Choose a graph $G$ whose size is as large as possible.
	 By Lemma    \ref{lem3} (2), there exists $\varnothing \neq S \subseteq V(G)$ such that \begin{equation}\label{a}
	 \mbox{odd}(G-S)+k\cdot i(G-S)\geq k|S|. \tag{a}
	 \end{equation}
	 We now proceed to discuss two cases as follows.

		{\textbf{Case 1.}} $\mbox{odd}(G-S)>0$.
		
	In this case, we first claim that there exists no even components in $G-S$. Suppose, for a contradiction, that there are  even components in $G-S$. We construct a new graph $G'$ by adding all possible edges between one odd component and all the even components. Then $\mbox{odd}(G'-S)=\mbox{odd}(G-S)$ and $i(G'-S)=i(G-S)$. $\mbox{odd}(G'-S)+k\cdot i(G'-S)\geq k|S|$ but $e(G')>e(G)$, which contradicts the maximality of $e(G)$.
	
	Let the odd components of $G-S$ be $G_{1}$, $G_{2}$, ..., $G_{p}$. Denote $|S|=s, |V(G_1)|=n_1,..., |V(G_p)|=n_p$.
	 Since $G$ has the maximum number of edges, we will add all possible edges between $S$ and each connected component. Furthermore, each connected component, as well as  $S$, will be a clique. Then $G=K_s \vee (K_{n_1}+K_{n_2}+...+K_{n_p})$. By Lemma \ref{lem4}, $e(K_s \vee (K_{n_1}+K_{n_2}+...+K_{n_p}))\leq e(K_s \vee (K_{n_1+n_2+...+n_p-p+1}+(p-1)K_1))$. Since $K_s \vee (K_{n_1+n_2+...+n_p-p+1}+(p-1)K_1)$ is still  not $\text{GFC}_k$, $G=K_s \vee (K_{n_1+n_2+...+n_p-p+1}+(p-1)K_1)$.
	 Hence $\mbox{odd}(G-S)=1$, by (\ref{a}) we have $1+k\cdot i(G-S)\geq ks$. Then $i(G-S)\geq s$. Then $G$ must be $K_s \vee (K_{n-2s}+sK_1)$. Since $K_{n-2s}$ is a nontrivial odd component,  $n-2s\geq 3$. Hence $1\leq s\leq \frac{n-3}{2}$.

 Let $f(s)=e(K_s \vee (K_{n-2s}+sK_1))=\frac{3s^2}{2}+(\frac{1}{2}-n)s$+$\frac{n^2-n}{2}$. Since $f(s)$ is  a quadratic function in the variable $s$ that opens upward and its axis of symmetry is at $s=\frac{2n-1}{6}$ and $(\frac{2n-1}{6}-1)-(\frac{n-3}{2}-\frac{2n-1}{6})=\frac{n+1}{6}>0$.  Thus $f(1)>f(\frac{n-3}{2})$. Therefore, $G=K_1 \vee (K_{n-2}+K_1)$ when $\mbox{odd}(G-S)>0$.

	{\textbf{Case 2.}} $\mbox{odd}(G-S)=0$.
	
		In this case, there exists no nontrivial odd components in $G-S$. We first claim that there exists no even components in $G-S$. Suppose, for a contradiction, that there are  even components. We construct a new graph $G''$ by connecting all vertices of the even components to an isolated vertex.
	Then $\mbox{odd}(G''-S)=1$ and $i(G''-S)=i(G-S)-1$. Since $i(G-S)+|S|$ and $n$ have the same parity,  $i(G-S)+|S|$ is odd. By (\ref{a}) we have  $i(G-S)\geq |S|$. Then $i(G-S)\geq |S|+1$. In $G''$, $\mbox{odd}(G''-S)+k\cdot i(G''-S)=1+k\cdot(i(G-S)-1)\geq 1+k|S|>k|S|$. Then $G''$ is still not $\text{GFC}_k$ with $\mbox{odd}(G''-S)=1$. It can be reduced to Case 1.
		
		Let $|S|=s$ and $i(G-S)=i$, then $i>s$ and $G=K_s \vee iK_1$. Since $n$ is odd and $G$ has the maximum number of edges, $G=K_\frac{n-1}{2}\vee \frac{n+1}{2}K_1$ when $\mbox{odd}(G-S)=0$.
		
		\vspace{\baselineskip}
		
		Now we compare the number of edges in the two cases. Denote $g(n)=e(K_1 \vee (K_{n-2}+K_1))-e(K_\frac{n-1}{2}\vee \frac{n+1}{2}K_1)=\frac{(n-3)(n-5)}{8} > 0$ for $n\geq 7$. $K_1 \vee (K_1+K_1)=K_1\vee 2K_1$ with $n=3$ and $e(K_1 \vee (K_3+K_1))=e(K_2\vee3K_1)$ with $n=5$. Then $G=K_1 \vee (K_{n-2}+K_1)$ when $n\geq 7$ and $n=3$; $G=K_1 \vee (K_3+K_1)$ or $K_2\vee 3K_1$ when $n=5$.

	In conclusion, if  $G$ is not $\text{GFC}_k$, either $G=K_1 \vee (K_{n-2}+K_1)$ or $e(G)<e(K_1 \vee (K_{n-2}+K_1))=\binom{n-1}{2}+1$ with $n\geq 7$ and $n=3$;  If  $G$ is not $\text{GFC}_k$, then $G=K_1 \vee (K_3+K_1)$, $e(G)<7$ or $G =K_2\vee 3K_1$ for $n=5$. It leads to a contradiction with Theorem   \ref{thm1}. Therefore, the assumption is false.
\end{proof}

	\begin{theorem} \label{thm2}
	For odd $n\geq 3$ and odd $k\geq 3$, let $G$ be a graph of order $n$.
	If $\rho(G)\geq \rho(K_1 \vee (K_{n-2}+K_1))$, then $G$ is $\text{GFC}_k$ unless $ G = K_1 \vee (K_{n-2}+K_1)$.
	
\end{theorem}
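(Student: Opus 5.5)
We argue by contradiction, in the same spirit as the proof of Theorem \ref{thm1}. Suppose $G$ has odd order $n$, satisfies $\rho(G)\geq \rho(K_1 \vee (K_{n-2}+K_1))$, is not $\text{GFC}_k$, and $G\neq K_1 \vee (K_{n-2}+K_1)$. By Lemma \ref{lem3} (2) there is $\varnothing\neq S\subseteq V(G)$ with $\mbox{odd}(G-S)+k\cdot i(G-S)\geq k|S|$. Adding edges inside $S$, inside components of $G-S$, and between $S$ and $V(G)-S$ does not change this inequality. The same holds if we merge all even components into one odd component, as in Theorem \ref{thm1}. By Lemma \ref{lem5}, each such addition strictly increases $\rho$, since the resulting graph is connected because $S\neq\varnothing$. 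Hence $\rho(G)\leq\rho(H)$ for a saturated graph $H$ of one of the two shapes found in the proof of Theorem \ref{thm1}.

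\textbf{Case 1: $\mbox{odd}(H-S)>0$.} Here $H=K_s\vee(K_{n_1}+\dots+K_{n_p}+iK_1)$. We replace this by $K_s\vee(K_{n-2s}+sK_1)$ with $1\leq s\leq\frac{n-3}{2}$. To keep $\rho$ monotone, instead of Lemma \ref{lem4} we use the standard Perron-vector argument: moving all but one vertex from the smaller cliques into the largest clique does not decrease $\rho$. When $\mbox{odd}(H-S)=1$ we get $i\geq s$, and we may take $i=s$ after moving the extra isolated vertices into the big clique, which keeps it odd and adds edges. It then remains to show
\[
\rho(K_s\vee(K_{n-2s}+sK_1))\leq\rho(K_1\vee(K_{n-2}+K_1)),
\]
with equality only when $s=1$.

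\textbf{Case 2: $\mbox{odd}(H-S)=0$.} Here $H\subseteq K_s\vee (n-s)K_1$ with $n-s\geq s+1$, so $H\subseteq K_{\frac{n-1}{2}}\vee\frac{n+1}{2}K_1$ and we must compare this graph with $K_1\vee(K_{n-2}+K_1)$. Note that $K_1\vee(K_{n-2}+K_1)$ contains $K_{n-1}$, so $\rho(G)\geq\rho(K_1 \vee (K_{n-2}+K_1))>n-2$. Using Lemma \ref{lem1} together with $e(K_{\frac{n-1}{2}}\vee\frac{n+1}{2}K_1)=\frac{(n-1)(3n+1)}{8}$, we get
\[
\rho\leq\sqrt{2e-n+1}=\sqrt{\tfrac{3n^2-6n+7}{4}},
\]
which is below $n-2$ once $n$ is moderately large, roughly $n\geq 9$. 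The cases $n=3,5,7$ we check directly via equitable quotient matrices. Note that $n=5$ is delicate, since there $K_2\vee 3K_1$ had equal size; its spectral radius is the largest root of $x^2-x-6=0$, namely $3$, which should be strictly smaller than that of $K_1\vee(K_3+K_1)$.

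\textbf{Main obstacle: the Case 1 comparison for general $s$.} We would use the equitable partition $\{S,\ K_{n-2s},\ sK_1\}$. This gives $\rho(K_s\vee(K_{n-2s}+sK_1))$ as the largest root of the cubic
\[
\varphi_s(x)=\det\begin{pmatrix} x-(s-1)& -(n-2s)& -s\\ -s & x-(n-2s-1) & 0\\ -s&0&x\end{pmatrix}.
\]
We aim to show $\varphi_s(x)-\varphi_1(x)>0$ for $x\geq\rho_1:=\rho(K_1\vee(K_{n-2}+K_1))>n-2$ and $2\leq s\leq\frac{n-3}{2}$; this forces $\rho_s<\rho_1$. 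We expect to factor out $(s-1)$ and bound the remaining quadratic in $x$ using $x>n-2$. If that becomes messy, a fallback is the cruder estimate
\[
\rho_s\leq\sqrt{2e_s-n+1}
\]
via Lemma \ref{lem1} with $e_s=f(s)$, which handles $s$ bounded away from $1$. Only small $s$, say $s=2$ (and perhaps $s=3$), would then need the explicit cubic comparison at $x=n-2$ and monotonicity beyond it.

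Finally, equality $\rho(G)=\rho_1$ forces every step above to be an equality. By the strict part of Lemma \ref{lem5} this gives $G=K_1\vee(K_{n-2}+K_1)$, contradicting our assumption.
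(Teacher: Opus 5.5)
Your argument is sound, but it takes a considerably longer route than the paper. The paper does no spectral saturation at all. It reads Theorem \ref{thm1} contrapositively: for $n\neq 5$, every non-$\text{GFC}_k$ graph other than $K_1\vee(K_{n-2}+K_1)$ has $e(G)\le\binom{n-1}{2}$. Lemma \ref{lem1} then gives $\rho(G)\le\sqrt{n^2-4n+3}<n-2\le\rho(K_1\vee(K_{n-2}+K_1))$. For $n=5$ the paper only compares the one extra $7$-edge graph, $K_2\vee 3K_1$, which has $\rho=3<3.09$.

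You instead rebuild the candidate list spectrally, using Perron--Frobenius edge additions and a spectral analogue of Lemma \ref{lem4}. That analogue is not in the paper (you only call it standard). It is also avoidable: you could use Lemma \ref{lem4} for edge counts and then apply Lemma \ref{lem1} to $e(H)$.

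For the same reason, your ``main obstacle'' is not one. Your fallback already covers every $s\ge 2$ when $n\ge 7$, because $f(s)\le\binom{n-1}{2}$ on $2\le s\le\frac{n-3}{2}$. This is exactly what Case 1 of Theorem \ref{thm1} establishes; for instance $\binom{n-1}{2}-f(2)=n-6$. For $n\le 5$, no $s\ge 2$ occurs at all.

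Your primary cubic plan also closes cleanly. One gets $\varphi_s(x)=x(x+1)(x-n+s+1)-s^2(x-n+2s+1)$, hence
\[
\varphi_s(x)-\varphi_1(x)=(s-1)\bigl(x^2-sx+(s+1)n-2s^2-3s-3\bigr).
\]
At $x=n-2$ the second factor equals $n^2-3n+1-2s^2-s\ge 2s^2+5s+1>0$, using $n\ge 2s+3$, and it is increasing for $x\ge n-2$.

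What your quotient-matrix route buys is a direct spectral ordering of the candidates that does not rely on Hong's bound. That is what one needs when size bounds are too weak, as in the small even-order cases of Theorem \ref{thm4}. For Theorem \ref{thm2} itself, the paper's two-line reduction suffices.

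There is one arithmetic slip. The correct count is $e(K_{\frac{n-1}{2}}\vee\frac{n+1}{2}K_1)=\frac{(n-1)(3n-1)}{8}$, so the radicand is $\frac{3n^2-8n+5}{4}$, which is already below $(n-2)^2$ for $n\ge 7$. Your overestimate is still a valid upper bound, so nothing breaks. It only forces $n=7$ into the direct checks unnecessarily.
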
	

\begin{proof}

	Let $n\geq 3$ and $k\geq 3$ be odd integers. Let $G$ be a graph of order $n$ that satisfies the conditions in  Theorem   \ref{thm2}. Assume to the contrary that $G$ is not $\text{GFC}_k$.
	
	By Theorem   \ref{thm1}, if $G\not =K_1 \vee (K_{n-2}+K_1)$, then $e(G)<\binom{n-1}{2}+1=\frac{n^2-3n+4}{2}$ with $n\geq 7$ and $n=3$. Thus $e(G)\leq \frac{n^2-3n+2}{2}$. According to Lemma   \ref{lem1}, $\rho(G)\leq \sqrt{2\frac{n^2-3n+2}{2}-n+1}\leq \sqrt{n^2-4n+3}<n-2$. However, $\rho(G)\geq \rho(K_1 \vee (K_{n-2}+K_1))\geq \rho(K_{n-1})=n-2$, which leads to a contradiction. When $n=5$, if $G\not =K_1 \vee (K_3+K_1)$ and $e(G)\geq \binom{4}{2}+1=7$, then $G=K_2\vee 3K_1$. But $\rho(K_2\vee 3K_1)=3<\rho(K_1 \vee (K_3+K_1))=3.09$,  which also leads to a contradiction.
	 Thus the result follows.
\end{proof}

\section{ Size Condition and Spectral Radius Condition  for a graph to be $\text{GBC}_k$ with odd $k\geq 3$.}
	
	For a graph $G$ of even order $n$, we establish  tight sufficient conditions in terms of size or spectral radius for $G$ to be $\text{GBC}_k$, where $k\geq 3$ is an odd integer. Theorem    \ref{thm3} gives the size condition for $G$ to be $\text{GBC}_k$. Theorem    \ref{thm4} gives the spectral radius condition for $G$ to be $\text{GBC}_k$.
	
		\begin{theorem}\label{thm3}
		For even $n\geq 4$ and odd $k\geq 3$, let $G$ be a graph of order $n$.

		(1)	If $n \geq 10$ and $e(G)\geq \binom{n-1}{2}+1$, then $G$ is $\text{GBC}_k$ unless $ G = K_1 \vee (K_{n-2}+K_1)$.
		
		(2)	If $4\leq n \leq 6$ and $e(G)\geq \frac{3n^2-2n}{8}$, then $G$ is $\text{GBC}_k$ unless $ G = K_\frac{n}{2} \vee \frac{n}{2}K_1$.
		
		(3) 	If $n =8$ and $e(G)\geq 22$, then $G$ is $\text{GBC}_k$ unless $ G = K_4 \vee 4K_1$ or $ G = K_1 \vee (K_6+K_1)$.
	\end{theorem}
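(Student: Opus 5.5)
The argument follows the proof of Theorem \ref{thm1}, with Lemma \ref{lem3} (3) in place of (2). Suppose $G$ satisfies the size hypothesis but is not $\text{GBC}_k$, and take such a $G$ with $e(G)$ maximum. By Lemma \ref{lem3} (3) there is $\varnothing\neq S\subseteq V(G)$ with
\[\mbox{odd}(G-S)+k\cdot i(G-S)\geq k|S|-1 .\]
Adding edges that do not alter $\mbox{odd}(G-S)$ or $i(G-S)$ preserves this inequality. So by maximality $S$ is a clique, joined to everything, and every component of $G-S$ is a clique. Even components can be absorbed into an odd component (or, if there is none, merged with an isolated vertex), exactly as in Theorem \ref{thm1}. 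Throughout I use the parity fact that $n$ is even, so $\mbox{odd}(G-S)+i(G-S)\equiv |S| \pmod 2$ once no even components remain.

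\textbf{Case 1: $\mbox{odd}(G-S)>0$.} Then $G=K_s\vee(K_{n_1}+\dots+K_{n_p})$. By Lemma \ref{lem4} with $p=1$ in its notation, the edge count only increases if every nontrivial odd component except one is replaced by $K_1$'s, keeping the same number of components. This collapses $G$ to $K_s\vee(K_{n-s-t}+ (t-1)K_1)$ with $t$ components.

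If exactly one nontrivial odd component remains, the inequality gives $1+k\,i\geq ks-1$. Since $k\geq3$, this forces $i\geq s$. Parity then gives $1+i\equiv s \pmod 2$, so $i\geq s+1$, and maximality gives $i=s+1$. Hence $G=K_s\vee(K_{n-2s-1}+(s+1)K_1)$ with $n-2s-1\geq3$. I must also handle the variant in which the large component is itself a single vertex; this falls into Case 2.

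The edge count
\[h(s)=e\bigl(K_s\vee(K_{n-2s-1}+(s+1)K_1)\bigr)\]
is an upward parabola in $s$, so it is maximized at an endpoint, $s=1$ or $s=\frac{n-4}{2}$. At $s=1$ it gives $G=K_1\vee(K_{n-3}+2K_1)$. I expect the corrected count at this endpoint to be $\binom{n-2}{2}+2$, which is smaller than the extremal graphs below. So Case 1 should not produce the extremal graph, except possibly for small $n$.

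\textbf{Case 2: $\mbox{odd}(G-S)=0$.} Then $G=K_s\vee iK_1$ with $ki\geq ks-1$, so $i\geq s$, and parity gives $i\equiv s\pmod 2$. Maximizing edges yields $i=s=\frac n2$, that is, $G=K_{n/2}\vee\frac n2K_1$, with
\[e\bigl(K_{n/2}\vee\tfrac n2K_1\bigr)=\binom{n/2}{2}+\frac{n^2}{4}=\frac{3n^2-2n}{8}.\]

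\textbf{Reconciling with the stated extremal graph.} The statement also names $K_1\vee(K_{n-2}+K_1)$ as an exception for large $n$. That graph has a cut vertex $v$ for which $G-v$ has an odd nontrivial component and an isolated vertex, giving $1+k\geq k-1$, so it is indeed not $\text{GBC}_k$. This shows that in Case 1 the isolated count only needs $i\geq s-1$ when the odd part contributes $2$ or more. I will therefore redo Case 1 more carefully. The condition is $p+k\,i\geq ks-1$, which for $s=1$ holds with $p=i=1$. In general, letting $p$ be the number of odd nontrivial components, Lemma \ref{lem4} reduces to $K_s\vee(K_{n-s-3(p-1)-i}+(p-1)K_3+iK_1)$ or similar shapes, and the maximum is attained at $s=1$, $i=1$, $p=1$. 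This gives $K_1\vee(K_{n-2}+K_1)$ with $\binom{n-1}{2}+1$ edges.

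\textbf{Final comparison.} I compare $\binom{n-1}{2}+1$ with $\frac{3n^2-2n}{8}$. Their difference is
\[\frac{n^2-10n+16}{8}=\frac{(n-2)(n-8)}{8},\]
which is positive for $n\geq10$, zero at $n=8$ (giving the two exceptions in (3)), and negative for $n\leq6$. This yields (1)–(3).

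\textbf{Main obstacle.} The delicate step is the Case 1 optimization over $(s,p,i)$ together with the parity constraints. I must check that no other configuration ties or beats these two graphs; for example $K_2\vee(K_{n-4}+\dots)$ has to be checked for small $n$ such as $n=8$. I will do this by bounding $e(G)$ as a function of $s$ for each admissible $(p,i)$ and using convexity to reduce to the endpoints.
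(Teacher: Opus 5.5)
\textbf{There is a genuine gap.} It sits exactly where the extremal graph $K_1\vee(K_{n-2}+K_1)$ comes from.

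Your reduction ``even components can be merged with an isolated vertex, exactly as in Theorem~\ref{thm1}'' is not valid here. When $\mbox{odd}(G-S)=0$, the barrier inequality $k\,i\ge ks-1$ only gives $i\ge s$. Because $n$ is even, $i=s$ is consistent with parity when $G-S$ has even components. In that situation, merging the even components with an isolated vertex changes the left side from $ks$ to $1+k(s-1)=ks-(k-1)<ks-1$ for $k\ge 3$. So the merged graph may well be $\text{GBC}_k$, and maximality gives you nothing. For example, merging in $K_1\vee(K_{n-2}+K_1)$ produces $K_n$. In Theorem~\ref{thm1} the trick worked only because odd order forced $i\ge s+1$.

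The paper therefore splits this case. For $i>s$ the merge is legitimate and reduces to Case~1. For $i=s$ one instead fills in all edges among the even components, giving $G=K_s\vee(K_{n-2s}+sK_1)$ with $1\le s\le n/2$ and $e(G)=\binom{n-s}{2}+s^2$. This is convex in $s$, so only the endpoints matter:
\begin{itemize}
\item $s=1$ gives $K_1\vee(K_{n-2}+K_1)$ with $\binom{n-1}{2}+1$ edges;
\item $s=n/2$ gives $K_{n/2}\vee\frac{n}{2}K_1$.
\end{itemize}
Your Case~2 only finds the second of these.

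Your attempted repair does not close the hole. For even $n$, $K_{n-2}$ is an even component, not an odd nontrivial one. So the witness that $K_1\vee(K_{n-2}+K_1)$ is not $\text{GBC}_k$ is $0+k\cdot 1\ge k-1$, not $1+k\ge k-1$. Moreover, the configuration $s=p=i=1$ that you optimize to violates your own parity constraint $p+i\equiv s\pmod 2$: it would need an odd component of even size $n-2$. Hence $K_1\vee(K_{n-2}+K_1)$ is never actually derived from your case analysis. Your final comparison $\frac{(n-2)(n-8)}{8}$ is numerically correct, but one of the two graphs it compares has not been shown to be a candidate maximizer. Your ``main obstacle'' paragraph effectively concedes the case analysis is unfinished.

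To complete the proof, add the subcase $\mbox{odd}(G-S)=0$, $i(G-S)=|S|$ with an even component, handled as above. Then observe that your Case~1 graph $K_1\vee(K_{n-3}+2K_1)$ and the $i>s$ graph $K_{n/2-1}\vee(\frac{n}{2}+1)K_1$ are subgraphs of the two extremal graphs. With that, the argument is complete and matches the paper's.
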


	\begin{proof}

Let $n\geq 4$ be an even integer and let $k\geq 3$ be an odd integer. Let $G$ be a graph of order $n$ that satisfies the conditions in Theorem   \ref{thm3}. Assume to the contrary that $G$ is not $\text{GBC}_k$. Choose a graph $G$ whose size is as large as possible. By Lemma    \ref{lem3} (3), there exists $\varnothing \neq S \subseteq V(G)$ such that
\begin{equation}\label{b}
	\mbox{odd}(G-S)+k\cdot i(G-S)\geq k|S|-1. \tag{b}
\end{equation}
We now proceed to discuss two cases as follows.
	
	{\textbf{Case 1.}} $\mbox{odd}(G-S)>0$.

	In this case, we first claim that there exists no even components in $G-S$. Suppose, for a contradiction, that there are  even components in $G-S$. We construct a new graph $G'$ by adding all possible edges between one odd component and all the even components.
Then $\mbox{odd}(G'-S)=\mbox{odd}(G-S)$ and $i(G'-S)=i(G-S)$. $\mbox{odd}(G'-S)+k\cdot i(G'-S)\geq k|S|-1$ but $e(G')>e(G)$, which contradicts the maximality of $e(G)$.
	
		Let the odd components of $G-S$ be $G_{1}$, $G_{2}$, ..., $G_{p}$. Denote $|S|=s, |V(G_1)|=n_1,..., |V(G_p)|=n_p$. Since $G$ has the maximum number of edges, we will add all possible edges between $S$ and each connected component. Furthermore, each connected component, as well as  $S$, will be a clique. Then $G=K_s \vee (K_{n_1}+K_{n_2}+...+K_{n_p})$. By Lemma \ref{lem4}, $e(K_s \vee (K_{n_1}+K_{n_2}+...+K_{n_p}))\leq e(K_s \vee (K_{n_1+n_2+...+n_p-p+1}+(p-1)K_1))$. Since $K_s \vee (K_{n_1+n_2+...+n_p-p+1}+(p-1)K_1)$ is still  not $\text{GBC}_k$,   $G=K_s \vee (K_{n_1+n_2+...+n_p-p+1}+(p-1)K_1)$. Hence $\mbox{odd}(G-S)=1$, by (\ref{b}) we have $1+k\cdot i(G-S)\geq ks-1$. Then $i(G-S)\geq s$ as $k\geq 3$. $\mbox{odd}(G-S)+i+s$ and $n$ have the same parity, then $i+s$ is odd. Then $i(G-S)\geq s+1$ and $G=K_s \vee (K_{n-2s-1}+(s+1)K_1)$. Since $K_{n-2s}$ is a nontrivial odd component, $n-2s-1\geq 3$. Hence $1\leq s\leq \frac{n-4}{2}$.

	Let $f(s)=e(K_s \vee (K_{n-2s-1}+(s+1)K_1))=\frac{3s^2}{2}+(\frac{5}{2}-n)s$+$\frac{n^2-3n}{2}$. Since $f(s)$ is  a quadratic function in the variable $s$ that opens upward and its axis of symmetry is at $s=\frac{2n-5}{6}$ and $(\frac{2n-5}{6}-1)-(\frac{n-4}{2}-\frac{2n-5}{6})=\frac{n-4}{6}\geq 0$, $f(1)\geq f(\frac{n-4}{2})$. Therefore, $G=K_1 \vee (K_{n-3}+2K_1)$ when $\mbox{odd}(G-S)>0$.
	
		{\textbf{Case 2.}} $\mbox{odd}(G-S)=0$.
		
		In this case, there exists no nontrivial odd components in $G-S$. Let $|S|=s, i(G-S)=i$. Since $\mbox{odd}(G-S)=0$, $ki\geq ks-1$.
		
		{\textbf{Subcase 2.1}} $i>s$.
		We first claim that there exists  no even components in $G-S$. Suppose, for a contradiction, that there are  even components in $G-S$.	We construct a new graph $G''$ by connecting all vertices of the even components to an isolated vertex. Then $\mbox{odd}(G''-S)=1$ and $i(G''-S)=i(G-S)-1$. In $G''$, $\mbox{odd}(G''-S)+k\cdot i(G''-S)=1+k\cdot(i(G-S)-1)\geq 1+k|S|>k|S|$. Then $G''$ is still not $\text{GBC}_k$ with $\mbox{odd}(G''-S)=1$. It can be reduced to Case 1.
Then there exists  no nontrivial odd components and even components in $G-S$, $G=K_s\vee iK_1$. Since $i>s$ and $G$ has the maximum number of edges, $G=K_{\frac{n}{2}-1}\vee (\frac{n}{2}+1)K_1$.

		{\textbf{Subcase 2.2}} $i=s$.
		We can add all possible edges within and between the connected even components of $G-S$. This would eventually lead to the connected even components of $G-S$ forming a clique. Then $G=K_s\vee (K_{n-2s}+sK_1)$ with $1\leq s \leq \frac{n}{2}$. $e(G)=\binom{n-s}{2}+s^2=\frac{3s^2}{2}+(\frac{1}{2}-n)s$+$\frac{n^2-n}{2}$. It is   a quadratic function in the variable $s$ that opens upward and its axis of symmetry is at $s=\frac{2n-1}{6}$.  $(\frac{2n-1}{6}-1)-(\frac{n}{2}-\frac{2n-1}{6})=\frac{n-8}{6}$. So $e(K_1 \vee (K_{n-2}+K_1))> e(K_\frac{n}{2}\vee \frac{n}{2}K_1)$ for $n\geq 10$; $e(K_1 \vee (K_6+K_1))= e(K_4\vee 4K_1)$ for $n=8$; $e(K_1 \vee (K_{n-2}+K_1))<e(K_\frac{n}{2}\vee \frac{n}{2}K_1)$ for $n\leq 6$.

	\vspace{\baselineskip}
	
	Now we  compare the number of edges in the three cases. We observe that $K_1 \vee (K_{n-3}+2K_1)$ is a subgraph of $K_1 \vee (K_{n-2}+K_1)$, $K_{\frac{n}{2}-1}\vee (\frac{n}{2}+1)K_1$ is a subgraph of $K_\frac{n}{2}\vee \frac{n}{2}K_1$. Therefore, we only need to compare the two graphs in Subcase 2.2, and the relevant results have already been computed in Subcase 2.2.
	
	If  $G$ is not $\text{GBC}_k$, either $G=K_1 \vee (K_{n-2}+K_1)$ or $e(G)<e(K_1 \vee (K_{n-2}+K_1))=\binom{n-1}{2}+1$ with $n \geq 10$; if  $G$ is not $\text{GBC}_k$, then $G=K_4\vee 4K_1$, $G=K_1 \vee (K_6+K_1)$ or $e(G)<e( K_4\vee 4K_1 )=22$ with $n =8$; if  $G$ is not $\text{GBC}_k$, either $G=K_\frac{n}{2}\vee \frac{n}{2}K_1$ or $e(G)<e( K_\frac{n}{2}\vee \frac{n}{2}K_1 )=\frac{3n^2-2n}{8}$ with $n \leq 6$. In any case, the above leads to a contradiction with  Theorem   \ref{thm3}. This completes the proof.
		\end{proof}
	
		\begin{theorem} \label{thm4}
		For even $n\geq 4$ and odd $k\geq 3$, let $G$ be a graph of order $n$.
		
		(1)	If $n \geq 8$ and $\rho(G)\geq \rho(K_1 \vee (K_{n-2}+K_1))$, then $G$ is $\text{GBC}_k$ unless $ G = K_1 \vee (K_{n-2}+K_1)$.
		
		(2)  If $4\leq n \leq 6$ and  $\rho(G)\geq \rho(K_\frac{n}{2} \vee \frac{n}{2}K_1)$, then $G$ is $\text{GBC}_k$ unless $ G = K_\frac{n}{2} \vee \frac{n}{2}K_1$.
	\end{theorem}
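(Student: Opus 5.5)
We argue by contradiction, as in the proof of Theorem \ref{thm2}, and reduce to the size result Theorem \ref{thm3} through Lemma \ref{lem1}, with separate treatment of the small cases. Suppose $G$ satisfies the spectral hypothesis but is not $\text{GBC}_k$. By Lemma \ref{lem3}(3) there is a nonempty $S$ with $\mbox{odd}(G-S)+k\cdot i(G-S)\geq k|S|-1$. Adding edges keeps this inequality (we only merge or enlarge components as in the case analysis of Theorem \ref{thm3}), and by Lemma \ref{lem5} it does not decrease $\rho$. So $G$ is a spanning subgraph of one of the extremal graphs found there: $K_s\vee(K_{n-2s-1}+(s+1)K_1)$, $K_{s}\vee (s+1+\cdot)K_1$-type graphs contained in $K_{\frac n2}\vee\frac n2K_1$, or $K_s\vee(K_{n-2s}+sK_1)$ with $1\le s\le \frac n2$. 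In every case $G$ is a spanning subgraph of $K_s\vee(K_{n-2s}+sK_1)$ for some $1\le s\le n/2$.

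\textbf{Part (1), $n\geq 10$.} Theorem \ref{thm3}(1) gives $G=K_1\vee(K_{n-2}+K_1)$ or $e(G)\le\binom{n-1}{2}$. In the second case Lemma \ref{lem1} gives $\rho(G)\le\sqrt{n^2-4n+3}<n-2\le\rho(K_1\vee(K_{n-2}+K_1))$, a contradiction, exactly as in Theorem \ref{thm2}.

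\textbf{Part (1), $n=8$.} Here $K_4\vee4K_1$ has the same size $22$, so the edge-count route leaves two candidates. Theorem \ref{thm3}(3) plus Lemma \ref{lem1} again rule out $e(G)\le 21$, since $\sqrt{2\cdot 21-7}=\sqrt{35}<6$. It remains to check $\rho(K_4\vee4K_1)<\rho(K_1\vee(K_6+K_1))$ directly. By the equitable quotient matrix, $\rho(K_4\vee 4K_1)$ is the largest root of $x^2-3x-16$, namely $(3+\sqrt{73})/2\approx5.77$. On the other hand $\rho(K_1\vee(K_6+K_1))>\rho(K_7)=6$. This disposes of $n=8$.

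\textbf{Part (2), $n\in\{4,6\}$.} Here the size threshold $\frac{3n^2-2n}{8}$ is too small for Lemma \ref{lem1} to beat $\rho(K_{\frac n2}\vee\frac n2K_1)$, so we argue structurally. By the reduction above, $G$ is a spanning subgraph of some $K_s\vee(K_{n-2s}+sK_1)$, $1\le s\le n/2$ (the Case 1 and Subcase 2.1 extremal graphs are subgraphs of these). By Lemma \ref{lem5} it suffices to compare the spectral radii of these finitely many graphs:
\begin{itemize}
\item for $n=4$: $K_1\vee(K_2+K_1)$ and $K_2\vee 2K_1$;
\item for $n=6$: $K_1\vee(K_4+K_1)$, $K_2\vee(K_2+2K_1)$ and $K_3\vee3K_1$.
\end{itemize}
For each we compute $\rho$ via the quotient matrix of its equitable partition $(S,\ K_{n-2s},\ I)$ and verify that $K_{\frac n2}\vee\frac n2K_1$ is strictly largest. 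Equality $\rho(G)=\rho(K_{\frac n2}\vee\frac n2 K_1)$ then forces $G$ to be that graph, by the strict part of Lemma \ref{lem5} (these graphs are connected).

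The main obstacle is the numerical comparison for $n=6$, in particular between $K_1\vee(K_4+K_1)$ and $K_3\vee3K_1$. We have $\rho(K_3\vee3K_1)=1+\sqrt{10}\approx4.16$, while $\rho(K_1\vee(K_4+K_1))$ is the largest root of a cubic from the $3\times3$ quotient matrix, approximately $4.1$. These values are close, so the plan is to evaluate the characteristic cubic at $1+\sqrt{10}$ and check its sign, rather than rely on crude bounds.
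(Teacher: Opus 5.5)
Your proof is correct in substance. For $n\ge 10$ it is the paper's argument (Theorem~\ref{thm3}(1) plus Lemma~\ref{lem1}). For $n=4,6$, comparing the spectral radii of the graphs $K_s\vee(K_{n-2s}+sK_1)$ is exactly what the paper does with Table~\ref{tab:spectral_data}. The genuine difference is at $n=8$. The paper lumps it with $n=4,6$: it asserts (``through an analysis similar to the proof of Theorem~\ref{thm3}'') that a non-$\text{GBC}_k$ graph lies in one of the candidates, and reads off numerically that $\rho(G_1)\approx 6.02$ is largest. You instead invoke the already proved Theorem~\ref{thm3}(3), rule out $e(G)\le 21$ by $\sqrt{35}<6$, and need only the exact comparison $\rho(K_4\vee 4K_1)=\frac{3+\sqrt{73}}{2}<6<\rho(K_1\vee(K_6+K_1))$. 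This avoids both decimal approximations and the unproved structural reduction.

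Your remark that Lemma~\ref{lem1} is too weak for $n=4,6$ is mistaken; the same route settles those cases too. By Theorem~\ref{thm3}(2), a non-$\text{GBC}_k$ graph other than $K_{\frac n2}\vee\frac n2K_1$ has $e(G)\le 4$ when $n=4$ and $e(G)\le 11$ when $n=6$. Hence $\rho(G)\le\sqrt5<\frac{1+\sqrt{17}}{2}$, respectively $\rho(G)\le\sqrt{17}<1+\sqrt{10}$. So the whole theorem follows from Theorem~\ref{thm3} and Lemma~\ref{lem1}, plus one eigenvalue computation at $n=8$.

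This matters because your opening reduction is not valid as written. The case analysis of Theorem~\ref{thm3} passes through Lemma~\ref{lem4}, which compares edge counts, not containment. It therefore does not show that $G$ is a spanning subgraph of some $K_s\vee(K_{n-2s}+sK_1)$, and for larger $n$ that claim is false. Take $K_1\vee 3K_3$ with $n=10$, $k=3$: it is not $\text{GBC}_3$ (let $S$ be the centre; then $3\ge 3\cdot 1-1$). Its minimum degree and independence number are both $3$, so it could only sit inside $K_3\vee(K_4+3K_1)$. But no three independent vertices of it have all their neighbours in a common $3$-set.

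For $n\le 6$ the containment you need does hold, but the reason is a direct count. If $i(G-S)<|S|$, then $\mbox{odd}(G-S)\ge k(|S|-i(G-S))-1\ge 2$, which forces $n\ge |S|+6\ge 7$. So $i(G-S)\ge |S|=s$, and $G\subseteq K_s\vee(K_{n-s-i}+iK_1)\subseteq K_s\vee(K_{n-2s}+sK_1)$.

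If you keep the structural route for $n=6$, the sign check needs one more line. Let $p(x)=x^3-3x^2-5x+3$ be the quotient polynomial of $K_1\vee(K_4+K_1)$. It is increasing for $x\ge 3$ and $p(1+\sqrt{10})=2\sqrt{10}-4>0$, which gives the strict inequality $\rho(K_1\vee(K_4+K_1))<1+\sqrt{10}$.
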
	
	
\begin{proof}

	\begin{table}[htbp]
	\centering
	\caption{Spectral radius of $G_s$ and $K_\frac{n}{2}\vee \frac{n}{2}K_1$}
	\label{example}
	\begin{tabular}{|c|*{4}{c|}}
		\hline
		$\rho(K_\frac{n}{2}\vee \frac{n}{2}K_1)$ & \diagbox{$n$}{$G_s$} & $\rho(G_1)$ & $\rho(G_2)$ & $\rho(G_3)$ \\
		\hline
		2.56 & $n=4$ & 2.17 & & \\
		\hline
		4.16 & $n=6$ & 4.05 & 3.62 & \\
		\hline
		5.77 & $n=8$ & 6.02 & 5.27 & 5.18 \\
		\hline
	\end{tabular}
	\label{tab:spectral_data}
\end{table}
Let $n\geq 4$ be an even integer and let $k\geq 3$ be an odd integer. Let $G$ be a graph of order $n$ that satisfies the conditions in Theorem   \ref{thm4}. Assume to the contrary that $G$ is not $\text{GBC}_k$.

From Theorem  \ref{thm3}, when $n\geq 10$, if $G\not =K_1 \vee (K_{n-2}+K_1)$, then $e(G)<\binom{n-1}{2}+1=\frac{n^2-3n+4}{2}$. Thus $e(G)\leq \frac{n^2-3n+2}{2}$. According to Lemma   \ref{lem1}, $\rho(G)\leq \sqrt{2 \frac{n^2-3n+2}{2}-n+1}\leq \sqrt{n^2-4n+3}<n-2$. However, $\rho(G)\geq \rho(K_1 \vee (K_{n-2}+K_1))\geq \rho(K_{n-1})=n-2$, which leads to a contradiction.

For $4 \leq n \leq 8$, we can compute the spectral radii directly and find that the graph with the maximum spectral radius is not a $\text{GBC}_k$ graph. Through an analysis similar to the proof of Theorem \ref{thm3}, we can conclude that the graph is one of the following graphs: $K_s\vee (K_{n-2s}+sK_1)$ with $1\leq s < \frac{n}{2}$, $K_\frac{n}{2}\vee \frac{n}{2}K_1$. Denote $G_s=K_s\vee (K_{n-2s}+sK_1)$ and calculation results are presented in Table \ref{example}. When $n=8$, the graph that is not $\text{GBC}_k$ with the maximum spectral radius is $K_1 \vee (K_6+K_1)$; when $n=4,6$, it is $K_\frac{n}{2}\vee \frac{n}{2}K_1$.
 For a graph of order 8, if $\rho(G)\geq \rho(K_1 \vee (K_6+K_1))$, either $G$ is $\text{GBC}_k$ or $G =K_1 \vee (K_6+K_1)$. This contradicts our assumption. For a graph of order 4 or 6, if $\rho(G)\geq \rho(K_\frac{n}{2}\vee \frac{n}{2}K_1)$, either $G$ is $\text{GBC}_k$ or $G =K_\frac{n}{2}\vee \frac{n}{2}K_1$. This also contradicts our assumption. 	Thus the result follows.

\end{proof}

	\section{ Size Condition and Spectral Radius Condition  for a graph to be  $k$-$d$-critical with odd $k\geq 3$.}
	
		Let $G$ be a  graph of order $n\geq 3$. $k\geq 3$ is odd and let $1\leq d < k$ with  $n\equiv d$ (mod 2). We establish tight sufficient conditions in terms of size or spectral radius for $G$ to be $k$-$d$-critical. Theorem    \ref{thm5} gives the size condition for $G$ to be $k$-$d$-critical. Theorem    \ref{thm6} gives the spectral radius condition for $G$ to be $k$-$d$-critical.
	
	\noindent \textbf{Remark.}
	If $d=k$ and $n$ is even, the complete graph $K_n$ is not $k$-$d$-critical. Therefore, there does not exist an upper bound on the number of edges such that any graph with more edges is $k$-$d$-critical. In the following discussion, we only consider the case where $1\leq d< k$.
	
		\begin{theorem}\label{thm5}
		Let $G$ be a  graph of order $n\geq 3$. $k\geq 3$ is odd and let $1\leq d < k$ with  $n\equiv d$ (mod 2).

	(1)	If $n \geq 9$ or $n=3, 7$ and $e(G)\geq \binom{n-1}{2}+1$, then $G$ is $k$-$d$-critical unless $ G = K_1 \vee (K_{n-2}+K_1)$.
		
	(2)	If $n = 4, 6$ and $e(G)\geq \frac{3n^2-2n}{8}$, then $G$ is $k$-$d$-critical unless $ G = K_\frac{n}{2} \vee \frac{n}{2}K_1$.
	
	(3) If $n =5$ and $e(G)\geq 7$, then $G$ is $k$-$d$-critical unless $ G = K_2 \vee 3K_1$ or $ G = K_1 \vee (K_3+K_1)$.
	
	(4) 	If $n =8$ and $e(G)\geq 22$, then $G$ is $k$-$d$-critical unless $ G = K_4 \vee 4K_1$ or $ G = K_1 \vee (K_6+K_1)$.
	\end{theorem}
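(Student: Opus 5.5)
The plan is to reduce Theorem \ref{thm5} to the extremal analysis already carried out in Theorems \ref{thm1} and \ref{thm3}. By Lemma \ref{lem6}, $G$ fails to be $k$-$d$-critical if and only if there is some $\varnothing\neq S\subseteq V(G)$ with
\[
\mbox{odd}(G-S)+k\cdot i(G-S)\geq k|S|-d+1 .
\]
Since $n\equiv d \pmod 2$, odd $n$ forces $d$ odd and even $n$ forces $d$ even, so the two parities of $n$ separate cleanly. The point I will exploit is that $1\le d\le k-1$ makes the right-hand side lie in $(k|S|-k,\ k|S|]$. Together with the parity relation $\mbox{odd}(G-S)+i(G-S)+|S|\equiv n \pmod 2$ (counting even components as contributing evenly), this should give exactly the same integer constraints on $i=i(G-S)$ and $s=|S|$ as in the $\text{GFC}_k$ and $\text{GBC}_k$ settings.

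I will take a counterexample $G$ of maximum size and follow the case split of Theorem \ref{thm1}/\ref{thm3}.

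\textbf{Case 1: $\mbox{odd}(G-S)>0$.} Exactly as before, I absorb the even components, make everything a clique joined to $K_s$, and use Lemma \ref{lem4} to reduce to a single nontrivial odd component. The inequality becomes $1+ki\ge ks-d+1$, that is, $i\ge s-d/k$. Since $d<k$, this gives $i\ge s$. If $n$ is odd, $i+s$ is even, and we get $K_s\vee(K_{n-2s}+sK_1)$, as in Theorem \ref{thm1}. If $n$ is even, $i+s$ is odd, so $i\ge s+1$, and we get $K_s\vee(K_{n-2s-1}+(s+1)K_1)$, as in Theorem \ref{thm3}.

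\textbf{Case 2: $\mbox{odd}(G-S)=0$.} Here $ki\ge ks-d+1>k(s-1)$, so again $i\ge s$. For odd $n$ with no even components, parity forces $i\ge s+1$. If even components exist and $i>s$, joining them to an isolated vertex reduces to Case 1, as in Theorem \ref{thm1}. Hence the extremal graph is $K_{(n-1)/2}\vee\frac{n+1}{2}K_1$. For even $n$, I split as in Subcases 2.1 and 2.2 of Theorem \ref{thm3}. The subcase $i>s$ gives $K_{n/2-1}\vee(\frac n2+1)K_1$. The subcase $i=s$ gives $K_s\vee(K_{n-2s}+sK_1)$ for $1\le s\le n/2$.

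Since the candidate extremal families coincide with those in Theorems \ref{thm1} and \ref{thm3}, the edge comparisons there carry over verbatim. For odd $n$ (that is, $n=3,5,7,9,\dots$) they give $K_1\vee(K_{n-2}+K_1)$, except at $n=5$, where $K_2\vee3K_1$ ties. For even $n$ they give $K_1\vee(K_{n-2}+K_1)$ when $n\ge10$, both graphs when $n=8$, and $K_{n/2}\vee\frac n2K_1$ when $n=4,6$. This yields items (1)--(4); note that item (1) merges the odd cases $n\ge7$ with the even cases $n\ge10$.

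Finally, I will check that each listed exception really is not $k$-$d$-critical.
\begin{itemize}
\item For $K_1\vee(K_{n-2}+K_1)$, take $S$ to be the dominating vertex. When $n$ is odd we have $1+k\ge k-d+1$, and when $n$ is even we have $k\ge k-d+1$.
\item For $K_{n/2}\vee\frac n2K_1$ and $K_2\vee 3K_1$, take $S$ to be the clique part. We get $k\frac n2\ge k\frac n2-d+1$ and $3k\ge 2k-d+1$ respectively.
\end{itemize}

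The step I expect to be the main obstacle is the bookkeeping in the reduction. I must verify that the strict inequality $d<k$, combined with parity, yields exactly $i\ge s$ or $i\ge s+1$ in each subcase, and that the edge-adding and merging operations in both cases preserve a violated inequality with the $d$-dependent threshold. This is where the Remark excluding $d=k$ is essential, because for $d=k$ the bound $i\ge s-d/k$ only gives $i\ge s-1$ and the extremal structure changes.
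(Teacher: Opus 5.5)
Your proposal is correct and follows the paper's proof essentially step for step. It uses Lemma~\ref{lem6}, a maximum-size counterexample, and the same Case 1/Case 2 split (with Subcases $i>s$ and $i=s$ for even $n$), derives $i\ge s$ from $d<k$ and refines it by the parity of $i+s$, and reuses the edge comparisons from Theorems~\ref{thm1} and~\ref{thm3}. Your explicit check that the exceptional graphs are not $k$-$d$-critical is a small addition the paper leaves implicit; for $n=3$ the left side there is $2k$ rather than $1+k$, since $K_{n-2}=K_1$ is trivial, but the inequality still holds.
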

	
\begin{proof}

	Let $n\geq 3$ be an integer. $k\geq 3$ is an odd integer, and let $1\leq d < k$ with  $n\equiv d$ (mod 2). Let $G$ be a graph of order $n$ that satisfies the conditions in Theorem   \ref{thm5}. Assume to the contrary that $G$ is not $k$-$d$-critical. Choose a graph $G$ whose size is as large as possible. By Lemma \ref{lem6}, there exists $\varnothing \neq S \subseteq V(G)$ such that
	\begin{equation}\label{c}
		\mbox{odd}(G-S)+k\cdot i(G-S)\geq k|S|-d+1. \tag{c}
	\end{equation}
 We now proceed to discuss two cases as follows.

	{\textbf{Case 1.}} $\mbox{odd}(G-S)>0$.
	
	In this case, we first claim that there exists no even components in $G-S$. Suppose, for a contradiction, that there are  even components in $G-S$. We construct a new graph $G'$ by adding all possible edges between one odd component and all the even components. Then $\mbox{odd}(G'-S)=\mbox{odd}(G-S)$ and $i(G'-S)=i(G-S)$. $\mbox{odd}(G'-S)+k\cdot i(G'-S)\geq k|S|-d+1$ but $e(G')>e(G)$, which contradicts the maximality of $e(G)$.
	
	Let the odd components of $G-S$ be $G_{1}$, $G_{2}$, ..., $G_{p}$. Denote $|S|=s, |V(G_1)|=n_1,..., |V(G_p)|=n_p$. Since $G$ has the maximum number of edges, we will add all possible edges between $S$ and each connected component. Furthermore, each connected component, as well as  $S$, will be a clique.
	Then $G=K_s \vee (K_{n_1}+K_{n_2}+...+K_{n_p})$. By Lemma \ref{lem4}, $e(K_s \vee (K_{n_1}+K_{n_2}+...+K_{n_p}))\leq e(K_s \vee (K_{n_1+n_2+...+n_p-p+1}+(p-1)K_1))$. Since $K_s \vee (K_{n_1+n_2+...+n_p-p+1}+(p-1)K_1)$ is still  not $k$-$d$-critical, $G=K_s \vee (K_{n_1+n_2+...+n_p-p+1}+(p-1)K_1)$. Hence, $\mbox{odd}(G-S)=1$, by (\ref{c}) we have  $1+k\cdot i(G-S)\geq ks-d+1$. Then $i(G-S)\geq s-\frac{d}{k}$ which further implies $i(G-S)\geq s$. $1+i+s$ and $n$ have the same parity. (1) $n$ is odd, then $i+s$ is even.  Hence $i(G-S)\geq s$ and $G=K_s \vee (K_{n-2s}+sK_1)$; (2) $n$ is even, then $i+s$ is odd and $i\geq s+1$.  Hence $G=K_s \vee (K_{n-2s-1}+(s+1)K_1)$.
	Following a calculation similar to that in Section 3 and Section 4, we find that $G=K_1 \vee (K_{n-2}+K_1)$ with odd $n$ and $G=K_1 \vee (K_{n-3}+2K_1)$ with even $n$.

	In this case, there exists  no nontrivial odd components in $G-S$. Let $|S|=s, i(G-S)=i$. Since $\mbox{odd}(G-S)=0$, $ki\geq ks-d+1$.
	
	{\textbf{Subcase 2.1}} $i>s$.
	We first claim that there exists  no even components in $G-S$. Suppose, for a contradiction, that there are  even components in $G-S$. We construct a new graph $G''$ by connecting all vertices of the even components to an isolated vertex. Then $\mbox{odd}(G''-S)=1$ and $i(G''-S)=i(G-S)-1$. In $G''$, $\mbox{odd}(G''-S)+k\cdot i(G''-S)=1+k\cdot(i(G-S)-1)\geq 1+k|S|>1+k|S|-d$. Then $G''$ is still not $k$-$d$-critical with $\mbox{odd}(G''-S)=1$. It can be reduced to Case 1. Then there exists  no nontrivial odd components and even components in $G-S$, $G=K_s\vee iK_1$. Since $i>s$ and $G$ has the maximum number of edges, $G=K_{\frac{n-1}{2}}\vee \frac{n+1}{2}K_1$ with odd $n$; $G=K_{\frac{n}{2}-1}\vee (\frac{n}{2}+1)K_1$ with even $n$.
	
	{\textbf{Subcase 2.2}} $i=s$.
	$\mbox{odd}(G-S)+i+s=2s$ and $n$ have the same parity, then n is even.
	We can add all possible edges within and between the connected even components of $G-S$. This would eventually lead to the connected even components of $G-S$ forming a clique. Then $G=K_s\vee (K_{n-2s}+sK_1)$ with $1\leq s \leq \frac{n}{2}$.
	Following the calculation Section 4, we have $G=K_\frac{n}{2}\vee \frac{n}{2}K_1$ with $n\leq 6$; $G=K_1 \vee (K_{n-2}+K_1)$ with $n\geq 10$; $G=K_4\vee 4K_1$ or $K_1 \vee (K_6+K_1)$ with $n=8$.

	\vspace{\baselineskip}
	
	Now we compare the number of edges in the cases discussed above. We observe that $K_1 \vee (K_{n-3}+2K_1)$ is a subgraph of $K_1 \vee (K_{n-2}+K_1)$, $K_{\frac{n}{2}-1}\vee (\frac{n}{2}+1)K_1$ is a subgraph of $K_\frac{n}{2}\vee \frac{n}{2}K_1$.
	 Therefore, we only need to compare the three graphs: $K
	 _1 \vee (K_{n-2}+K_1)$, $K_\frac{n}{2}\vee \frac{n}{2}K_1$ and $G=K_{\frac{n-1}{2}}\vee \frac{n+1}{2}K_1$.  $e(K_1 \vee (K_{n-2}+K_1))-e(K_\frac{n-1}{2}\vee \frac{n+1}{2}K_1)=\frac{(n-3)(n-5)}{8} > 0$ for all odd $n\geq 7$. $K_1 \vee (K_1+K_1)=K_1\vee 2K_1$ with $n=3$ and $e(K_1 \vee (K_3+K_1))=e(K_2\vee 3K_1)$ with $n=5$.
	
	  \[e(K_1 \vee (K_{n-2}+K_1)) - e(K_\frac{n}{2}\vee \tfrac{n}{2}K_1)=\frac{(n-2)(n-8)}{8} \begin{cases}
	 	>0, & \text{for even $n\geq 10$};     \\
	 	=0, & \text{for $n=8$};\\
	 	< 0, &\text{for even $n\leq 6$}.
	 	
	 \end{cases}\]

                                                                                              In conclusion, if  $G$ is not $k$-$d$-critical, either $G=K_1 \vee (K_{n-2}+K_1)$ or $e(G)<e(K_1 \vee (K_{n-2}+K_1))=\binom{n-1}{2}+1$ with $n \geq 9$ and $n=3, 7$; if  $G$ is not $k$-$d$-critical, then $G=K_1 \vee (K_3+K_1)$, $G=K_2\vee 3K_1$ or $e(G)<e(K_1 \vee (K_{n-2}+K_1))=7$ with $n=5$; if  $G$ is not $k$-$d$-critical, either $G=K_\frac{n}{2}\vee \frac{n}{2}K_1$ or $e(G)<e( K_\frac{n}{2}\vee \frac{n}{2}K_1 )=\frac{3n^2-2n}{8}$ with $n =4,6$; if  $G$ is not $k$-$d$-critical, then $G=K_1 \vee (K_6+K_1)$, $G=K_4 \vee 4K_1$ or $e(G)<22$ with $n =8$. In any case, the above leads to a contradiction with Theorem   \ref{thm5}. This completes the proof.
	\end{proof}

		\begin{theorem} \label{thm6}
		For $n\geq 3$ and odd $k\geq 3$, let $G$ be a graph of order $n$ and let $1\leq d < k$ with  $n\equiv d$ (mod 2).
		
		(1) If $n \geq 7$ or $n=3, 5$ and $\rho(G)\geq \rho(K_1 \vee (K_{n-2}+K_1))$, then $G$ is $k$-$d$-critical unless $ G = K_1 \vee (K_{n-2}+K_1)$.
		
		(2) If $ n =4, 6$ and  $\rho(G)\geq \rho(K_\frac{n}{2} \vee \frac{n}{2}K_1)$, then $G$ is $k$-$d$-critical unless $ G = K_\frac{n}{2} \vee \frac{n}{2}K_1$.
	\end{theorem}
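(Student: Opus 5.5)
Following the pattern of Theorems \ref{thm2} and \ref{thm4}, we argue by contradiction: suppose $G$ satisfies the spectral hypothesis but is not $k$-$d$-critical. We reduce to the size result Theorem \ref{thm5} by combining Hong's bound (Lemma \ref{lem1}) with monotonicity of the spectral radius under adding edges (Lemma \ref{lem5}). Since $K_1\vee(K_{n-2}+K_1)$ contains $K_{n-1}$, we have $\rho(K_1\vee(K_{n-2}+K_1))\ge n-2$.

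\textbf{Large order.} For $n\ge 9$ (and for $n=3,7$), Theorem \ref{thm5}(1) says that if $G\neq K_1\vee(K_{n-2}+K_1)$ then $e(G)\le\binom{n-1}{2}=\frac{n^2-3n+2}{2}$. We may assume $G$ is connected: otherwise $\rho(G)$ is attained on a component of order at most $n-1$, so $\rho(G)\le n-2$, with equality only if $G=K_{n-1}+K_1$. Lemma \ref{lem1} then gives
\[
\rho(G)\le\sqrt{n^2-4n+3}<n-2\le\rho(K_1\vee(K_{n-2}+K_1)),
\]
a contradiction. For $n=8$, Theorem \ref{thm5}(4) leaves one extra non-$k$-$d$-critical graph with $e(G)\ge 22$, namely $K_4\vee4K_1$. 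By Table \ref{example}, $\rho(K_4\vee 4K_1)=5.77<6.02=\rho(K_1\vee(K_6+K_1))$, so this graph is excluded as well. For $n=5$, Theorem \ref{thm5}(3) leaves $K_2\vee3K_1$, and $\rho(K_2\vee3K_1)=3<3.09=\rho(K_1\vee(K_3+K_1))$, as computed in the proof of Theorem \ref{thm2}. Any other graph with $e(G)\le 6$ has $\rho(G)\le\sqrt{8}<3$.

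\textbf{Small even order $n=4,6$.} Here Theorem \ref{thm5}(2) says that a non-critical $G\neq K_{n/2}\vee\frac n2K_1$ has $e(G)<\frac{3n^2-2n}{8}$, and Hong's bound alone may not be sharp enough. Instead we use the structural reduction from the proof of Theorem \ref{thm5}. Every non-$k$-$d$-critical graph of even order is a spanning subgraph of one of the extremal graphs $K_s\vee(K_{n-2s}+sK_1)$ with $1\le s\le n/2$. Indeed, the edge-maximal members found there are $K_1\vee(K_{n-3}+2K_1)\subseteq K_1\vee(K_{n-2}+K_1)$, $K_{n/2-1}\vee(\frac n2+1)K_1\subseteq K_{n/2}\vee\frac n2K_1$, and the Subcase 2.2 graphs $G_s$. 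Adding edges keeps the witnessing set $S$ valid, so by Lemma \ref{lem5} it suffices to compare the spectral radii of the $G_s$. Table \ref{example} gives $\rho(G_1)=2.17<2.56$ for $n=4$, and $\rho(G_1)=4.05$, $\rho(G_2)=3.62<4.16$ for $n=6$. Thus $K_{n/2}\vee\frac n2K_1$ is the unique maximizer, and by the strict part of Lemma \ref{lem5} any proper spanning subgraph of it has strictly smaller spectral radius.

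The main obstacle is making this "every non-critical graph lies inside an extremal graph" step rigorous. The proof of Theorem \ref{thm5} only tracks edge-maximal graphs, so one must check that each edge-saturation operation used there (joining even components to an odd component or to an isolated vertex, completing cliques) preserves inequality (\ref{c}). For $n=8$ there is a second subtlety: Theorem \ref{thm5}(4) only applies when $e(G)\ge 22$. For the case $e(G)\le 21$, Hong's bound gives $\rho(G)\le\sqrt{35}<6$, and $6\le 6.02$, which closes that case.
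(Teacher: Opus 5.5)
Your proposal is correct and follows essentially the paper's route. You use Theorem \ref{thm5} plus Hong's bound (Lemma \ref{lem1}) for $n\ge 7$ and $n=3,5$, and the spectral radii of the graphs $K_s\vee(K_{n-2s}+sK_1)$ from Table \ref{example} for $n=4,6$. Your handling of $n=8$ (Theorem \ref{thm5}(4), $\rho(K_4\vee 4K_1)=5.77<6.02$, and $\sqrt{35}<6$ when $e(G)\le 21$) is more self-contained than the paper's appeal to the table.

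One caveat: state the containment claim only for $n=4,6$. There $|V(G-S)|\le 5$ allows at most one nontrivial odd component, so every saturation step is a genuine edge addition that preserves (\ref{c}). For general even $n$ the claim is false, because the Lemma \ref{lem4} merging step is not an edge addition. For example, $K_1\vee 3K_3$ with $n=10$, $k=3$, $d=2$ is not $k$-$d$-critical, yet it lies in no $K_s\vee(K_{n-2s}+sK_1)$.
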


\begin{proof}
	Let $n\geq 3$ be an integer. $k\geq 3$ is an odd integer, and let $1\leq d < k$ with  $n\equiv d$ (mod 2). Let $G$ be a graph of order $n$ that satisfies the conditions in Theorem   \ref{thm6}. Assume to the contrary that $G$ is not $k$-$d$-critical.
	
From Theorem  \ref{thm5},  when $n\geq 9$ or $n=3,7$, if $G\not=K_1 \vee (K_{n-2}+K_1)$, then $e(G)<\binom{n-1}{2}+1=\frac{n^2-3n+4}{2}$. Thus $e(G)\leq \frac{n^2-3n+2}{2}$. According to Lemma   \ref{lem1}, $\rho(G)\leq \sqrt{2 \frac{n^2-3n+2}{2}-n+1}\leq \sqrt{n^2-4n+3}<n-2$. However, $\rho(G)\geq \rho(K_1 \vee (K_{n-2}+K_1))\geq \rho(K_{n-1})=n-2$, which leads to a contradiction.  When $n=5$, if $G\not =K_1 \vee (K_3+K_1)$ and $e(G)\geq \binom{4}{2}+1=7$, then $G=K_2\vee 3K_1$. But $\rho(K_2\vee 3K_1)=3<\rho(K_1 \vee (K_3+K_1))=3.09$,  which also leads to a contradiction.

When $n=4,6,8$, Table \ref{tab:spectral_data} shows that the graph of order $8$ that is not $k$-$d$-critical with the maximum spectral radius is $K_1 \vee (K_6+K_1)$; if $n=4,6$, it is $K_\frac{n}{2}\vee \frac{n}{2}K_1$. For a graph of order 8, if $\rho(G)\geq \rho(K_1 \vee (K_6+K_1))$, either $G$ is $k$-$d$-critical or $G =K_1 \vee (K_6+K_1)$. This contradicts our assumption. For a graph of order 4 or 6, if $\rho(G)\geq \rho(K_\frac{n}{2}\vee \frac{n}{2}K_1)$, either $G$ is $k$-$d$-critical or $G =K_\frac{n}{2}\vee \frac{n}{2}K_1$. This also contradicts our assumption. 	Thus the result follows.
\end{proof}

	\section{ Size Condition and Spectral Radius Condition for a graph to be $\text{GFC}_k$ or $\text{GBC}_k$ with even $k$.}
	
	For a graph $G$ of order $n$, we establish  tight sufficient conditions in terms of size or spectral radius for $G$ to be $\text{GFC}_k$ or $\text{GBC}_k$, where $k\geq 2$ is an even integer. Theorem   \ref{thm7} gives the  size condition for $G$ to be $\text{GFC}_k$ or $\text{GBC}_k$. Theorem    \ref{thm8} gives the spectral radius condition for $G$ to be $\text{GFC}_k$ or $\text{GBC}_k$. Besides, we prove the equivalence of the existence of some factors in Theorem \ref{thm9}. Thus we establish tight sufficient condition in terms of size or spectral radius under which $G-v$ contains a $\{K_2,\{C_t: t\geq 3\}\}$-factor for any $v$ in $V(G)$; we also establish tight sufficient condition in terms of size or spectral radius under which $G-v$ contains a fractional perfect matching  for any $v$ in $V(G)$.

	\begin{theorem}\label{thm7}
		For $n\geq 3$ and even $k$, let $G$ be a graph of order $n$.

		(1)	If $n \geq 9$ or $n=3,7$, and $e(G)\geq \binom{n-1}{2}+1$, then $G$ is  $\text{GFC}_k$ for odd $n$ and $\text{GBC}_k$ for even $n$ unless $ G = K_1 \vee (K_{n-2}+K_1)$.
		
		(2)	If $n=4,6$ and $e(G)\geq \frac{3n^2-2n}{8}$, then $G$ is $\text{GBC}_k$ unless $ G = K_\frac{n}{2} \vee \frac{n}{2}K_1$.
		
		(3) 	If $n =5$ and $e(G)\geq 7$, then $G$ is $\text{GFC}_k$ unless $ G = K_2 \vee 3K_1$ or $ G = K_1 \vee (K_3+K_1)$.	
		
		(4) 	If $n =8$ and $e(G)\geq 22$, then $G$ is $\text{GBC}_k$ unless $ G = K_4 \vee 4K_1$ or $ G = K_1 \vee (K_6+K_1)$.
	\end{theorem}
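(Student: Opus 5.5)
We follow the extremal scheme of Theorems \ref{thm1} and \ref{thm3}, now using Lemma \ref{lem3}(1) in place of parts (2) and (3). Suppose $G$ satisfies the size hypothesis but is not $\text{GFC}_k$ (odd $n$) or $\text{GBC}_k$ (even $n$), and choose such a $G$ with $e(G)$ as large as possible. By Lemma \ref{lem3}(1) there is a nonempty $S\subseteq V(G)$ with $i(G-S)\geq |S|$. Write $s=|S|$ and $i=i(G-S)$. The crucial simplification for even $k$ is that the condition only involves isolated vertices; nontrivial odd components play no role. Consequently, adding edges to $G$ that keep the $i$ isolated vertices isolated in $G-S$ preserves the violation. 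By maximality, $S$ is a clique joined to everything, and all non-isolated vertices of $G-S$ form a single clique (merging components never creates isolated vertices). Hence
\[
G=K_s\vee(K_{n-s-i}+iK_1),\qquad i\geq s,
\]
where $n-s-i\neq 1$. If $n-s-i=1$, that vertex would itself be isolated, so we simply count it among the isolated vertices.

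Next we optimise over $(s,i)$. Moving a vertex from the isolated part into the clique only adds edges, so maximality forces $i=s$ whenever $n-2s\geq 2$. The remaining situations are $n-2s=0$, giving $K_{n/2}\vee \frac n2K_1$ for even $n$, and $n-2s=1$, which forces $i=s+1$ and gives $K_{(n-1)/2}\vee\frac{n+1}{2}K_1$ for odd $n$. Every candidate therefore belongs to the family $K_s\vee(K_{n-2s}+sK_1)$ with $1\leq s\leq \lfloor (n-2)/2\rfloor$, together with the single balanced graph $K_{\lfloor (n-1)/2\rfloor}\vee\lceil (n+1)/2\rceil K_1$ (for odd $n$, $i=s+1$ with an empty clique part). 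In every case $G$ is a subgraph of one of these graphs.

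The edge count of the family member is
\[
e(K_s\vee(K_{n-2s}+sK_1))=\tfrac{3s^2}{2}+\left(\tfrac12-n\right)s+\tfrac{n^2-n}{2},
\]
a convex quadratic in $s$. It is therefore maximised at an endpoint, and the comparisons already carried out in Theorems \ref{thm1}, \ref{thm3} and \ref{thm5} apply verbatim:
\[
e(K_1\vee(K_{n-2}+K_1))-e(K_{\frac{n-1}{2}}\vee\tfrac{n+1}{2}K_1)=\tfrac{(n-3)(n-5)}{8}
\]
for odd $n$, and
\[
e(K_1\vee(K_{n-2}+K_1))-e(K_{\frac n2}\vee\tfrac n2K_1)=\tfrac{(n-2)(n-8)}{8}
\]
for even $n$. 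These give $K_1\vee(K_{n-2}+K_1)$ as the unique extremal graph for $n\geq 9$ and $n=3,7$, a tie with $K_2\vee 3K_1$ at $n=5$, a tie with $K_4\vee 4K_1$ at $n=8$, and $K_{n/2}\vee\frac n2 K_1$ for $n=4,6$.

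The extremal graphs themselves are indeed not $\text{GFC}_k$/$\text{GBC}_k$. Taking $S$ to be the cut vertex, respectively the clique $K_{n/2}$ or $K_2$, gives $i(G-S)\geq|S|$, which violates Lemma \ref{lem3}(1). A non-$\text{GFC}_k$/$\text{GBC}_k$ graph with at least the stated number of edges must therefore be one of the listed exceptions, which contradicts the assumption on $G$.

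The main obstacle is the reduction step: justifying rigorously that maximality collapses all non-isolated vertices of $G-S$ into one clique with $i=s$, except at the parity boundary. The care is needed with the degenerate case in which the residual clique has exactly one vertex, which changes $i$, and with the remark that $n-2s=1$ is only possible for odd $n$. The rest is the same quadratic comparison as in Section 4.
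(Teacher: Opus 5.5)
Your proposal is correct and follows essentially the paper's proof. Lemma \ref{lem3}(1) plus edge-maximality reduces everything to $K_s\vee(K_{n-2s}+sK_1)$ (the paper's Case 1) or the balanced join (Case 2), and you then use the same convex-quadratic comparison with the differences $\frac{(n-3)(n-5)}{8}$ and $\frac{(n-2)(n-8)}{8}$; your observation that every non-$\text{GFC}_k$/$\text{GBC}_k$ graph is a spanning subgraph of some candidate is exactly what makes the uniqueness of the exceptions rigorous.

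There are two small slips. First, your summary names the balanced graph $K_{\lfloor (n-1)/2\rfloor}\vee\lceil (n+1)/2\rceil K_1$, which for even $n$ is $K_{n/2-1}\vee(\frac n2+1)K_1$ rather than the $K_{n/2}\vee\frac n2K_1$ you correctly found a sentence earlier, so write $K_{\lfloor n/2\rfloor}\vee\lceil n/2\rceil K_1$. Second, when the clique part is empty, moving a single isolated vertex adds no edge, so it is cleaner to note directly that $K_s\vee(K_{n-s-i}+iK_1)\subseteq K_s\vee(K_{n-2s}+sK_1)$ whenever $i\geq s$.
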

	
\begin{proof}
	
	For $n\geq 3$ and even $k$, let $G$ be a graph of order $n$ that satisfies the conditions in Theorem   \ref{thm7}. Assume to the contrary that $G$ is not $\text{GFC}_k$ for odd $n$ and $\text{GBC}_k$ for even $n$. Choose a graph $G$ whose size is as large as possible. By Lemma    \ref{lem3} (1), there exists $\varnothing \neq S \subseteq V(G)$ such that
	\begin{equation}\label{d}
		i(G-S)\geq |S|. \tag{d}
	\end{equation}
  We now proceed to discuss two cases as follows.
	
	{\textbf{Case 1.}} There are nontrivial components in $G-S$.
	
In this case,	we will add all possible edges between $S$ and each connected component. Furthermore, each connected component, as well as  $S$, will be a clique. Let $|S|=s$ and $i(G-S)=i$, then $G=K_s \vee (K_{n-i-s}+iK_1)$. $e(K_s \vee (K_{n-i-s}+iK_1))\leq e(K_s \vee (K_{n-2s}+sK_1))$. Then $G=K_s \vee (K_{n-2s}+sK_1)$. Through an analysis similar to the proof of Theorem \ref{thm1}, we can conclude that $G=K_1 \vee (K_{n-2}+K_1)$.

	{\textbf{Case 2.}} There is no nontrivial component in $G-S$.
	
	In this case, there exists  no nontrivial components in $G-S$. Let $|S|=s$, then $i(G-S)=n-s$.
	By (\ref{d})  we have $s\leq \lfloor \frac{n}{2}\rfloor$. Then $G=K_{\lfloor \frac{n}{2}\rfloor} \vee \lceil \frac{n}{2}\rceil K_1$ in this case.
	
	\vspace{\baselineskip}
	
	Now we compare the number of edges in the two cases. Following the calculation Section 5, we have $e(K_1 \vee (K_{n-2}+K_1))> e(K_{\lfloor \frac{n}{2}\rfloor} \vee \lceil \frac{n}{2}\rceil K_1)$ for $n\geq 9$ and $n=3,7$; $e(K_1 \vee (K_{n-2}+K_1))< e(K_\frac{n}{2}\vee \frac{n}{2}K_1)$ for $n=4,6$; $e(K_1 \vee (K_{n-2}+K_1))= e(K_{\lfloor \frac{n}{2}\rfloor} \vee \lceil \frac{n}{2}\rceil K_1)$ for $n=5,8$.

In summery,	if  $G$ is not $\text{GFC}_k$ or $\text{GBC}_k$, either $G=K_1 \vee (K_{n-2}+K_1)$ or $e(G)<e(K_1 \vee (K_{n-2}+K_1))=\binom{n-1}{2}+1$ with $n\geq 9$ and $n=3,7$; if  $G$ is not $\text{GFC}_k$, then $G=K_1 \vee (K_3+K_1)$, $G=K_2 \vee 3K_1$ or $e(G)<e(K_1 \vee (K_3+K_1))=7$ with $n=5$; if  $G$ is not $\text{GBC}_k$, either $G=K_\frac{n}{2}\vee \frac{n}{2}K_1$ or $e(G)<e( K_\frac{n}{2}\vee \frac{n}{2}K_1 )=\frac{3n^2-2n}{8}$ with $n=4,6$; if  $G$ is not $\text{GBC}_k$, then $G=K_1 \vee (K_6+K_1)$, $G=K_4\vee 4K_1$ or $e(G)<22$ with $n=8$. In any case, the above leads to a contradiction with Theorem \ref{thm7}. This completes the proof.
\end{proof}

	\begin{theorem} \label{thm8}
	For $n\geq 3$ and even $k$, let $G$ be a graph of order $n$.

	(1)	If $n \geq 7$ or $n=3,5$, and $\rho(G)\geq \rho(K_1 \vee (K_{n-2}+K_1))$,  then $G$ is  $\text{GFC}_k$ for odd $n$ and $\text{GBC}_k$ for even $n$ unless $ G = K_1 \vee (K_{n-2}+K_1)$.
	
	(2)	If $n=4,6$ and $\rho(G)\geq \rho(K_\frac{n}{2} \vee \frac{n}{2}K_1)$, then $G$ is $\text{GBC}_k$ unless $ G = K_\frac{n}{2} \vee \frac{n}{2}K_1$.
	
\end{theorem}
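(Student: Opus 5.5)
We argue by contradiction, exactly as in the proofs of Theorems \ref{thm2}, \ref{thm4} and \ref{thm6}. Let $G$ satisfy the hypothesis and suppose $G$ is not $\text{GFC}_k$ (odd $n$) or not $\text{GBC}_k$ (even $n$). By Lemma \ref{lem3}(1) the obstruction is purely the inequality $i(G-S)\geq |S|$ for some nonempty $S$. This is the same inequality that appears in Theorem \ref{thm7}, so we may import its extremal analysis wholesale.

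\textbf{Case $n\geq 9$ or $n=3,7$.} Suppose $G\neq K_1\vee(K_{n-2}+K_1)$. Theorem \ref{thm7}(1) then gives $e(G)\leq \binom{n-1}{2}=\frac{n^2-3n+2}{2}$. Lemma \ref{lem1} yields
\[
\rho(G)\leq\sqrt{n^2-3n+2-n+1}=\sqrt{n^2-4n+3}<n-2 .
\]
On the other hand, $K_{n-1}$ is a subgraph of $K_1\vee(K_{n-2}+K_1)$, so Lemma \ref{lem5} gives $\rho(G)\geq\rho(K_1\vee(K_{n-2}+K_1))\geq n-2$. This is a contradiction. (Lemma \ref{lem1} needs connectivity; if $G$ is disconnected, apply it to the component carrying $\rho(G)$, which has fewer vertices, and the same bound persists.)

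\textbf{Case $n=5$.} Suppose $G\neq K_1\vee(K_3+K_1)$. If $e(G)\geq 7$, Theorem \ref{thm7}(3) forces $G=K_2\vee 3K_1$, and $\rho(K_2\vee 3K_1)=3<3.09\approx\rho(K_1\vee(K_3+K_1))$. If instead $e(G)\leq 6$, Lemma \ref{lem1} gives $\rho(G)\leq\sqrt{12-5+1}=\sqrt8<3$. Either way we contradict the hypothesis.

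\textbf{Cases $n=4,6,8$.} Here the size bound is not sharp enough for the spectral statement. The plan is to redo the maximality reduction from the proof of Theorem \ref{thm7}. Adding edges preserves $i(G-S)\geq|S|$ (as long as no isolated vertex of $G-S$ is touched), and by Lemma \ref{lem5} it strictly increases $\rho$. Hence a spectrally extremal non-$\text{GBC}_k$ graph is a supergraph-maximal one, namely some $G_s=K_s\vee(K_{n-2s}+sK_1)$ with $1\leq s<\frac n2$, or $K_{\frac n2}\vee\frac n2K_1$. The spectral radii of these graphs are already recorded in Table \ref{tab:spectral_data}. For $n=8$ the maximum is $\rho(G_1)=6.02$, attained by $K_1\vee(K_6+K_1)$. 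For $n=4,6$ the maximum is attained by $K_{\frac n2}\vee\frac n2K_1$, with $2.56$ and $4.16$ respectively. Any other non-$\text{GBC}_k$ graph is a proper spanning subgraph of one of these candidates, so by Lemma \ref{lem5} its spectral radius is strictly smaller than the relevant threshold. This contradicts the hypothesis and gives (1) for $n=8$ and (2) for $n=4,6$.

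The main obstacle is these small cases. We must make sure that every non-$\text{GBC}_k$ graph really is a spanning subgraph of one of the listed candidates; in Case 1 of Theorem \ref{thm7}, $i>s$ is allowed, and those graphs are subgraphs of $G_s$ or of $K_{\lfloor n/2\rfloor}\vee\lceil n/2\rceil K_1$. Once that is settled, the table values must be checked numerically (each is a root of a small quotient-matrix cubic). Strictness of Lemma \ref{lem5} requires irreducibility, which holds because every candidate is connected.
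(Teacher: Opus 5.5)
Your proposal is correct and follows the paper's proof essentially step for step. You use Theorem~\ref{thm7} with Lemma~\ref{lem1} for $n\geq 9$ and $n=3,7$, the comparison $\rho(K_2\vee 3K_1)=3<3.09$ for $n=5$, and the spectral radii of the candidates $K_s\vee(K_{n-2s}+sK_1)$ in Table~\ref{tab:spectral_data} for $n=4,6,8$. You are in fact slightly more complete than the paper: you dispose of the case $n=5$, $e(G)\leq 6$, and you justify that every non-$\text{GBC}_k$ graph is a spanning subgraph of some $K_s\vee(K_{n-2s}+sK_1)$.

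Your parenthetical on disconnected $G$ is unnecessary, since the paper assumes connectivity. Its claim that ``the same bound persists'' is also false as stated (take $K_{n-1}+K_1$). The conclusion still holds, because a disconnected $G$ has $\rho(G)\leq n-2$, which is below every threshold in the statement.
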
	
	
\begin{proof}
	
	For $n\geq 3$ and even $k$, let $G$ be a graph of order $n$ that satisfies the conditions in Theorem   \ref{thm8}. Assume to the contrary that $G$ is not $\text{GFC}_k$ for odd $n$ and $\text{GBC}_k$ for even $n$.

From Theorem  \ref{thm7}, when $n\geq 9$ or $n=3,7$, if $G\not=K_1 \vee (K_{n-2}+K_1)$, then $e(G)<\binom{n-1}{2}+1=\frac{n^2-3n+4}{2}$. Thus $e(G)\leq \frac{n^2-3n+2}{2}$. According to Lemma   \ref{lem1}, $\rho(G)\leq \sqrt{2 \frac{n^2-3n+2}{2}-n+1}\leq \sqrt{n^2-4n+3}<n-2$. However, $\rho(G)\geq \rho(K_1 \vee (K_{n-2}+K_1))\geq \rho(K_{n-1})=n-2$, which leads to a contradiction. When $n=5$, if $G\not =K_1 \vee (K_3+K_1)$ and $e(G)\geq \binom{4}{2}+1=7$, then $G=K_2\vee 3K_1$. But $\rho(G)=3<\rho(K_1 \vee (K_3+K_1))=3.09$,  which also leads to a contradiction.
	
	When $n=4,6,8$, Table \ref{tab:spectral_data} shows that the graph of order $8$ that is not $\text{GBC}_k$ with the maximum spectral radius is $K_1 \vee (K_6+K_1)$; if $n=4,6$, it is $K_\frac{n}{2}\vee \frac{n}{2}K_1$.
	 For a graph of order 8, if $\rho(G)\geq \rho(K_1 \vee (K_6+K_1))$, either $G$ is $\text{GBC}_k$ or $G =K_1 \vee (K_6+K_1)$. This contradicts our assumption. For a graph of order 4 or 6, if $\rho(G)\geq \rho(K_\frac{n}{2}\vee \frac{n}{2}K_1)$, either $G$ is $\text{GBC}_k$ or $G =K_\frac{n}{2}\vee \frac{n}{2}K_1$. This also contradicts our assumption. 	Thus the result follows.
\end{proof}
		\vspace{\baselineskip}
	
	Observing that the following factors share the same structural characterization: $i(G-S)\leq |S|$ for all $S \subseteq V(G)$, we proceed from the definition and provide a renewed proof.

	\begin{theorem}\label{thm9}
		Let $G$ be a graph. $k\geq 2$ and $t\geq 3$ are integers.
		The following conditions are equivalent.
		\\
		(1) $G$ has a $\{K_2,\{C_t\colon t\geq 3\}\}$-factor;\\
		(2) $G$ has a $\{K_2,\{C_{2t+1}\colon t\geq 1 \}\}$-factor;\\
		(3) $G$ has a fractional perfect matching;\\
		(4) $G$ has a perfect $k$-matching where $k$ is even.
	\end{theorem}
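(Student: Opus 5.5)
We prove the cycle of implications $(2)\Rightarrow(1)\Rightarrow(3)\Rightarrow(2)$ and separately $(3)\Leftrightarrow(4)$, working directly from the definitions. The only external input is Lemma \ref{lem2} (half-integrality of optimal fractional matchings).

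\textbf{The easy implications.} $(2)\Rightarrow(1)$ is immediate, since odd cycles are cycles. For $(1)\Rightarrow(3)$, take a $\{K_2,\{C_t\colon t\geq 3\}\}$-factor $F$. Put $f(e)=1$ on each $K_2$-component edge, $f(e)=1/2$ on each cycle edge, and $f(e)=0$ elsewhere. Every vertex then has weight sum exactly $1$, so $f$ is a fractional perfect matching. For $(3)\Leftrightarrow(4)$ with $k$ even: if $g$ is a perfect $k$-matching, then $g/k$ is a fractional perfect matching. Conversely, given a fractional perfect matching, Lemma \ref{lem2} gives one, $f$, with values in $\{0,1/2,1\}$ and total $n/2$, hence perfect. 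Then $kf$ takes values in $\{0,k/2,k\}\subseteq\{0,1,\dots,k\}$, and the weight at each vertex is exactly $k$. This is the only place where the evenness of $k$ is used.

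\textbf{The main step, $(3)\Rightarrow(2)$.} By Lemma \ref{lem2}, choose a fractional perfect matching $f$ with values in $\{0,1/2,1\}$. Let $H$ be the spanning subgraph of edges with $f(e)>0$. A vertex incident to an edge of value $1$ has no other support edge. Every other vertex has exactly two incident edges of value $1/2$. So the components of $H$ are copies of $K_2$ (edges of value $1$) and cycles (edges of value $1/2$). Among all such $f$, choose one minimizing the number of even cycles in $H$. If $H$ contains an even cycle $C=v_1v_2\cdots v_{2m}v_1$, redefine $f$ on $C$ by $1,0,1,0,\dots$ alternately. The vertex sums remain $1$, and $C$ is replaced by $m$ copies of $K_2$, with no new cycles created. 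This contradicts minimality, so $H$ has only $K_2$'s and odd cycles of length at least $3$, which is a $\{K_2,\{C_{2t+1}\colon t\geq 1\}\}$-factor.

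The hardest point is ensuring the half-integral structure. Once Lemma \ref{lem2} is invoked, the decomposition of $H$ into edges and cycles follows by degree counting at each vertex. We must also check that an optimal half-integral $f$ is in fact perfect; this holds because $\mu_f(G)=n/2$ when a fractional perfect matching exists.
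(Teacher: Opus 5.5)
Your proof is correct and follows the paper's argument. The key step $(3)\Rightarrow(2)$ is the same in both: take a half-integral fractional perfect matching from Lemma \ref{lem2}, read off $K_2$'s and cycles from its support, and split each even cycle alternately. Two parts of your version are cleaner than the paper's walk argument: your degree-counting description of the support, and your explicit check that $\mu_f(G)=n/2$ forces the optimal half-integral $f$ to be perfect.

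The only other difference is in how the implications are arranged. The paper proves $(1)\Rightarrow(4)$ directly, with $k/2$ on cycle edges and $k$ on $K_2$ edges, then $(4)\Rightarrow(3)$ by scaling, so it uses Lemma \ref{lem2} only once. You instead prove $(1)\Rightarrow(3)$ and get $(3)\Rightarrow(4)$ by scaling the half-integral matching by $k$, which uses the lemma a second time.
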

	\begin{proof}	
		
		(1)$\Rightarrow$(4): Since $G$ has a $\{K_2,\{C_t\colon t\geq 3\}\}$-factor, it has a spanning subgraph  consisting of cycles and $K_2$. We define a function $f$ that assigns a weight of $\frac{k}{2}$ to the edge in the cycles and a weight of $k$ to the $K_2$, while all remaining edges are assigned a weight of $0$. In this way, $\sum_{e\in E_G(v)} f(e)= k$ for all $v$ and we find a perfect $k$-matching of $G$.
		
		(4)$\Rightarrow$(3):
		Since $G$ has a perfect $k$-matching, there is a function $f:E(G) \rightarrow \{0,1,...,k\}$ with $\sum_{e\in E(G)} f(e)= \frac{nk}{2}$ and $\sum_{e\in E_G(v)} f(e)= k$. We define a new function $f'(e)=\frac{f(e)}{k}$.  Hence, the range of $f'$ lies within the interval $[0,1]$ and  $\sum_{e\in E_G(v)} f'(e)= 1$ for all $v$. Then $G$ has a fractional perfect matching.

		(3)$\Rightarrow$(2): By Lemma    \ref{lem2}, we can define a function $f$ that assigns a value of 0, $\frac{1}{2}$ or 1 to each edge in $G$. The edges of value 1 are a matching in $G$.
		Since $G$ has a fractional perfect matching, all vertices in the graph are ``saturated". For an edge $e_0=v_0v_1$ with $f(e_0)=\frac{1}{2}$, to make $v_1$ saturate, there is another edge $e_1=v_1v_2$ incident to $v_1$, and $f(e_1)=\frac{1}{2}$. Similarly, to make $v_2$ saturate, there is an edge $e_2=v_2v_3$ incident to $v_2$...  Since the graph $G$ is  finite, the vertex-edge sequence $v_0e_0v_1e_1v_2e_2v_3......$will eventually return to $v_0$.
		Then edges with value $\frac{1}{2}$ will form  cycles.
		The edges of a cycle of even length can be alternately assigned the values 0 and 1, so that they becomes a disjoint union of copies of $K_2$. Therefore, we obtain a spanning subgraph consisting solely of $K_2$ and odd cycles. So $G$ has a $\{K_2,\{C_{2t+1}\colon t\geq 1\}\}$-factor.
		
		(2)$\Rightarrow$(1): odd cycles-factors is a special case of  cycle factors. Therefore, it is evident that (2) implies (1).
	\end{proof}
	
	From Theorem \ref{thm9} and Lemma  \ref{lem7}, we can obtain size conditions and spectral conditions for graphs that still contain a $\{K_2,\{C_t: t\geq 3\}\}$-factor after deleting any one vertex.
	
	\begin{corollary}\label{thm10}
		For $n\geq 3$, let $G$ be a graph of order $n$.
		
		(1)	If $n \geq 9$, $n=3,7$, $e(G)\geq \binom{n-1}{2}+1$ and $ G \not= K_1 \vee (K_{n-2}+K_1)$, then $G-v$ has a $\{K_2,\{C_t: t\geq 3\}\}$-factor for any $v\in V(G)$.
		
		(2)	If $n=4,6$, $e(G)\geq \frac{3n^2-2n}{8}$ and $ G \not=K_\frac{n}{2} \vee \frac{n}{2}K_1$, then $G-v$ has a $\{K_2,\{C_t: t\geq 3\}\}$-factor for any $v\in V(G)$.
		
		(3) If $n =5$, $e(G)\geq 7$, then $G-v$ has a $\{K_2,\{C_t: t\geq 3\}\}$-factor for any $v\in V(G)$ unless $ G = K_2 \vee 3K_1$ or $ G = K_1 \vee (K_3+K_1)$.	
		
		(4) If $n =8$, $e(G)\geq 22$, then $G-v$ has a $\{K_2,\{C_t: t\geq 3\}\}$-factor for any $v\in V(G)$ unless $ G = K_4 \vee 4K_1$ or $ G = K_1 \vee (K_6+K_1)$.
	\end{corollary}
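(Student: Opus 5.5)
The corollary follows by chaining three earlier results: Theorem \ref{thm7}, Lemma \ref{lem7} and Theorem \ref{thm9}. Fix any even $k$; the simplest choice is $k=2$. In each of the four parts the hypotheses on $n$ and $e(G)$ are exactly those of the matching part of Theorem \ref{thm7}, and the excluded graphs are the same. So, once the exceptional graphs are set aside, Theorem \ref{thm7} tells us that $G$ is $\text{GFC}_k$ when $n$ is odd and $\text{GBC}_k$ when $n$ is even. In part (1) $n$ may be either odd or even. Parts (2) and (4) have even $n$, and part (3) has $n=5$, which is odd.

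Next apply Lemma \ref{lem7}. Since $k$ is even and the parity of $n$ matches the type ($\text{GFC}_k$ for odd $n$, $\text{GBC}_k$ for even $n$), the lemma gives that $G-v$ has a perfect $k$-matching for every $v\in V(G)$. The lemma assumes $G$ is connected, which holds because the paper's standing convention is that all graphs are connected.

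Finally, apply Theorem \ref{thm9} with (4)$\Rightarrow$(1) to the graph $G-v$. This converts the perfect $k$-matching of $G-v$ into a $\{K_2,\{C_t: t\geq 3\}\}$-factor of $G-v$, for every $v$.

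Almost all of this is routine bookkeeping. The one step needing care is matching each part's parity of $n$ to the correct clause of Lemma \ref{lem7}. The only real obstacle is the implication (4)$\Rightarrow$(1) in Theorem \ref{thm9}. The paper proves (4)$\Rightarrow$(3)$\Rightarrow$(2)$\Rightarrow$(1), and the step (3)$\Rightarrow$(2) relies on Lemma \ref{lem2} to obtain a half-integral perfect fractional matching. I would check that Lemma \ref{lem2} applies, namely that a maximum fractional matching attaining $\mu_f=|V|/2$ can be chosen with values in $\{0,\frac12,1\}$ while remaining perfect. This holds because such a matching has total value $|V|/2$ and every vertex sum is at most $1$, so every vertex sum must equal $1$. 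With that checked, the corollary follows immediately.
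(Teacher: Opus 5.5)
Your proposal is correct and is essentially the paper's argument: the paper derives the corollary in one line from Theorem \ref{thm9} and Lemma \ref{lem7}, with Theorem \ref{thm7} supplying the $\text{GFC}_k$/$\text{GBC}_k$ conclusion, and you have made the parity bookkeeping and the (4)$\Rightarrow$(1) step explicit. The connectivity required by Lemma \ref{lem7} also follows from the hypotheses without the standing convention, since every edge threshold in the four parts exceeds $\binom{n-1}{2}$, the maximum size of a disconnected graph of order $n$.
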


	\begin{corollary} \label{thm11}
		For $n\geq 3$, let $G$ be a graph of order $n$.
		
		(1)	If $n \geq 7$, $n=3,5$, $\rho(G)\geq \rho(K_1 \vee (K_{n-2}+K_1))$ and $ G \not= K_1 \vee (K_{n-2}+K_1)$, then $G-v$ has a $\{K_2,\{C_t: t\geq 3\}\}$-factor for any $v\in V(G)$.
		
		(2)	If $n=4,6$, $\rho(G)\geq \rho(K_\frac{n}{2} \vee \frac{n}{2}K_1)$ and $ G \not=K_\frac{n}{2} \vee \frac{n}{2}K_1$, then $G-v$ has a $\{K_2,\{C_t: t\geq 3\}\}$-factor for any $v\in V(G)$.
		
	\end{corollary}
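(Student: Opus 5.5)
This corollary is a direct translation of Theorem \ref{thm8} through Lemma \ref{lem7} and Theorem \ref{thm9}; no new extremal argument should be needed. Fix an even integer $k$, say $k=2$. Combining Lemma \ref{lem7} with the equivalence (1)$\Leftrightarrow$(4) of Theorem \ref{thm9} applied to each $G-v$ gives the following chain. For a connected graph $G$ of order $n\geq 3$, $G$ is $\text{GFC}_k$ (if $n$ is odd) or $\text{GBC}_k$ (if $n$ is even) if and only if $G-v$ has a perfect $k$-matching for every $v\in V(G)$. That in turn holds if and only if $G-v$ has a $\{K_2,\{C_t: t\geq 3\}\}$-factor for every $v\in V(G)$. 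The parity requirement in Lemma \ref{lem7} is automatic here, since we simply use GFC for odd $n$ and GBC for even $n$.

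For part (1), take $G$ of order $n$ with $n\geq 7$ or $n\in\{3,5\}$. Assume $\rho(G)\geq \rho(K_1\vee(K_{n-2}+K_1))$ and $G\neq K_1\vee(K_{n-2}+K_1)$. Theorem \ref{thm8}(1) then says $G$ is $\text{GFC}_k$ for odd $n$ and $\text{GBC}_k$ for even $n$. By the chain above, every $G-v$ has the required factor. Part (2) is identical: use Theorem \ref{thm8}(2) with the extremal graph $K_{\frac n2}\vee\frac n2K_1$ for $n=4,6$.

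The only delicate point is the connectedness hypothesis in Lemma \ref{lem7}. The paper's standing convention is that all graphs are connected. I would still justify it directly to be safe. The spectral hypothesis gives $\rho(G)\geq n-2$ in part (1), because $K_{n-1}$ is a subgraph of $K_1\vee(K_{n-2}+K_1)$ and Lemma \ref{lem5} applies. Suppose $G$ were disconnected. Its spectral radius is that of some component. If that component has at most $n-2$ vertices, then $\rho(G)\leq n-3$, a contradiction. So the component must be $K_{n-1}$ plus an isolated vertex. That graph has spectral radius exactly $n-2$, which is strictly less than $\rho(K_1\vee(K_{n-2}+K_1))$ by the strict part of Lemma \ref{lem5}. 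Hence $G$ is connected. For part (2) the small cases $n=4,6$ can be checked in the same way against the values in Table \ref{tab:spectral_data}, or we can simply rely on the paper's standing connectivity assumption.

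The main obstacle is therefore just this bookkeeping: making sure that Lemma \ref{lem7}'s hypotheses are met and that the parity of $n$ matches the correct notion (GFC versus GBC). Once that is settled, the corollary follows immediately.
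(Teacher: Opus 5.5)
Your proposal is correct and follows the paper's own route. The paper obtains this corollary by combining Theorem~\ref{thm8} with Lemma~\ref{lem7} and the equivalence (1)$\Leftrightarrow$(4) of Theorem~\ref{thm9} applied to each $G-v$. Your extra check that the spectral hypothesis forces $G$ to be connected is sound: it uses $\rho(K_{n-1}+K_1)=n-2<\rho(K_1\vee(K_{n-2}+K_1))$, together with the comparison against Table~\ref{tab:spectral_data} for $n=4,6$. It makes explicit what the paper leaves to its standing assumption that all graphs are connected.
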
	
	
	\noindent \textbf{Remark.}  If we replace ``$\{K_2,\{C_t: t\geq 3\}\}$-factor"  with ``$\{K_2,\{C_{2t+1}\colon t\geq 1 \}\}$-factor" or ``fractional perfect matching" in Corollaries \ref{thm9} and \ref{thm10}, the results still hold.

	\section*{Declaration of competing interest}
	
	The authors declare that they have no known competing financial interests or personal relationships that could have appeared to influence the work reported in this paper.
	
	\section*{ Data availability}
	
	No data was used for the research described in the article.

\end{document}